\newtheorem{theorem}{Theorem}[section]
\newtheorem{lemma}[theorem]{Lemma}
\newtheorem{prop}[theorem]{Proposition}
\newtheorem{cor}[theorem]{Corollary}
\newtheorem{remark}[theorem]{Remark}
\theoremstyle{definition}
\newtheorem{definition}[theorem]{Definition}
\newcommand{\E}{\mathbb{E}}
\newcommand{\F}{\mathbb{F}}
\newcommand{\fknh}{f_k(n,H)}
\newcommand{\fknt}{f_k(n,T)}
\newcommand{\Z}{\mathbb{Z}}
\newcommand{\Prob}{\mathbb{P}}
\numberwithin{equation}{section}
\begin{document}

\title{Repeated patterns in proper colourings}

\author{David Conlon}
\address{Department of Mathematics, California Institute of Technology, Pasadena, CA 91125, USA}
\curraddr{}
\email{dconlon@caltech.edu}
\thanks{Conlon was supported by NSF Award DMS-2054452 and Tyomkyn by ERC Synergy Grant DYNASNET 810115, the H2020-MSCA-RISE Project CoSP- GA No.~823748 and GACR Grant 19-04113}

\author{Mykhaylo Tyomkyn}
\address{Department of Applied Mathematics, Charles University, 11800 Prague, Czech Republic}
\curraddr{}
\email{tyomkyn@kam.mff.cuni.cz}
\thanks{}

\date{}

\maketitle

\begin{abstract}
For a fixed graph $H$, what is the smallest number of colours $C$ such that there is a proper edge-colouring of the complete graph $K_n$ with $C$ colours containing no two vertex-disjoint colour-isomorphic copies, or repeats, of $H$? We study this function and its generalisation to more than two copies using a variety of combinatorial, probabilistic and algebraic techniques. For example, we show that for any tree $T$ there exists a constant $c$ such that any proper edge-colouring of $K_n$ with at most $c n^2$ colours contains two repeats of $T$, while there are colourings with at most $c' n^{3/2}$ colours for some absolute constant $c'$ containing no three repeats of any tree with at least two edges. We also show that for any graph $H$ containing a cycle there exist $k$ and $c$ such that there is a proper edge-colouring of $K_n$ with at most $c n$ colours containing no $k$ repeats of $H$, while, for a tree $T$ with $m$ edges, a colouring with $o(n^{(m+1)/m})$ colours contains $\omega(1)$ repeats of $T$.
\end{abstract}

\section{Introduction} \label{sec:intro}

A considerable body of recent work in extremal combinatorics is devoted to the study of rainbow patterns in proper edge-colourings of complete graphs. To mention two such results (amongst many~\cite{BaPoSu, BePoSu, CoPe, EhGlJo, GaRaWaWo, GlJo, GlKuMoOs, KeYe, KiKuKuOs, MoPoSu, MoPoSu2, Po, PoSu, PoSu2}), there is the work of Alon, Pokrovskiy and Sudakov~\cite{AlPoSu}  showing that any proper edge-colouring of $K_n$ contains a rainbow path of length $n - o(n)$ and the work of Montgomery,  Pokrovskiy and Sudakov~\cite{MoPoSu3} and, independently, Keevash and Staden~\cite{KeSt} resolving a celebrated conjecture of Ringel, one of whose statements involves finding a rainbow copy of any tree with $n$ edges in a particular proper edge-colouring of $K_{2n+1}$. For the most part, this recent work has focused on finding large structures in proper edge-colourings. We instead study small structures, our aim being to understand when a proper edge-colouring contains two or more repeats of a particular graph $H$.

To be more precise, we say that two copies of a graph $H$ in a colouring of $K_n$ are \emph{colour isomorphic} if there exists an isomorphism between them preserving the colours. The following function is our main object of study.

\begin{definition}
For $k,n\geq 2$ and a graph $H$, define $\fknh$ to be the smallest integer $C$ such that there is a proper edge-colouring of $K_n$ with $C$ colours containing no $k$ vertex-disjoint colour-isomorphic copies (or `repeats') of $H$.
\end{definition}  

We make several remarks about this definition. First, one could, in principle, ask the same question without the restriction to proper colourings. However, this changes the character of the question completely. Indeed, consider the colouring of the complete graph on vertex set $\{1, 2, \dots, n\}$ where we colour the edge $ij$ with $i < j$ by the colour $i$. Then this is a colouring with $n-1$ colours which does not even contain two disjoint edges of the same colour. On the other hand, when we restrict to proper colourings, we have that $f_k(n, K_2) \geq \lceil \frac{1}{k-1}\binom{n}{2}\rceil$ by a straightforward application of the pigeonhole principle. For $n$ sufficiently large in terms of $k$, it also follows from several well-known decomposition results, such as Gustavsson's theorem~\cite{Gu}, that this bound is tight.

Our second remark collects together several simple observations that we will use throughout. 

\begin{remark}\label{rem:trivial}
	The quantity $\fknh$ is monotone increasing in $n$, but decreasing in $k$ and in $H$ (with respect to taking subgraphs). Moreover, since every proper colouring has at least $n-1$ colours,
	$$\binom{n}{2}\geq \fknh\geq n-1.$$ 
\end{remark}

Finally, although our definition contains no requirement that the copies of $H$ should be rainbow, all of the results where we find repeats of a particular graph $H$ remain true up to a constant factor if we insist that each copy is rainbow. This then brings our work more fully in line with the body of research discussed at the outset. With these preliminaries out of the way, we now describe our main results.

In the classical Tur\'an problem, the growth rate of the extremal function ex$(n,H)$ is subject to a well-known trichotomy. Namely, non-bipartite graphs, bipartite graphs with a cycle and forests satisfy $\textrm{ex}(n,H)=\Theta(n^2)$, $n^{1+\Omega(1)} \leq \textrm{ex}(n,H)\leq n^{2-\Omega(1)}$ and $\textrm{ex}(n,H)=\Theta(n)$, respectively. Our first theorem shows that something broadly similar holds for $f_2(n, H)$, although, unlike the extremal function, our function can, and usually does, degenerate for bipartite graphs with a cycle. Note that here and throughout, all terms in the $O$-notation are to be interpreted with respect to $n$, with all other variables treated as constants.

\begin{theorem}\label{thm:f2}
	The growth rate of $f_2(n,H)$ satisfies:
	\begin{itemize}
\item[(i)] $f_2(n,H)=\Theta(n^2)$ if $H$ is a forest. Otherwise, $f_2(n,H)=O(n^{2-\Omega(1)})$. 
\item[(ii)] If $H$ is non-bipartite, then $f_2(n,H)\leq n+1$. 
\item[(iii)] If $H$ is bipartite and $e(H)\geq 2|H|-2$, then $f_2(n,H)=\Theta(n)$.
\item[(iv)] There exist bipartite graphs $H$ with $n^{1+\Omega(1)}\leq f_2(n,H)\leq n^{2-\Omega(1)}$.
\end{itemize}
\end{theorem}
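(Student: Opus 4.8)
The plan is to treat the four parts in turn, after recording that several bounds are immediate. The upper bound $f_2(n,H)=O(n^2)$ in (i) and the lower bounds $\Omega(n)$ in (iii) and (iv) are all contained in Remark~\ref{rem:trivial}. Moreover, monotonicity in $H$ lets us replace $H$ by a convenient subgraph in several places: in the non-forest half of (i) and in (ii) it suffices to colour so as to avoid repeats of a single cycle $C_\ell\subseteq H$. For (ii) we then take $\ell$ odd, identify $[n]$ with a subset of $\mathbb{Z}_p$ where $p=n$ if $n$ is odd and $p=n+1$ otherwise (so that $p$ is odd and at most $n+1$), and colour $\{i,j\}$ by $i+j\in\mathbb{Z}_p$; this is proper and uses at most $n+1$ colours. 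If $\phi,\psi$ are two copies of $C_\ell$ that are colour-isomorphic via $\sigma$, then $\chi:=\phi-\psi\circ\sigma$ satisfies $\chi(u)+\chi(v)=0$ for every edge $uv$; walking around the odd cycle and using that $2$ is invertible mod $p$ forces $\chi\equiv0$, so $\phi=\psi\circ\sigma$ and the two copies occupy the same vertex set. Hence no two vertex-disjoint colour-isomorphic copies exist, giving $f_2(n,C_\ell)\le n+1$ and, by monotonicity, $f_2(n,H)\le n+1$ for every non-bipartite $H$. The only routine point is to see that one may work modulo $n+1$ rather than modulo the least prime above $n$, which is what keeps the bound at $n+1$.

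The core of the theorem is the lower bound $f_2(n,T)=\Omega(n^2)$ for a tree $T$; since every forest embeds in a tree, this establishes (i) for all forests. Suppose $K_n$ is properly coloured with $m$ colours and has no two disjoint colour-isomorphic copies of $T$. The model case $T=P_3$ already shows the mechanism. A cherry centred at $v$ with colour pair $\{a,b\}$ exists exactly when $a,b\in\mathrm{pal}(v)$, and two cherries at $u\neq v$ with the \emph{same} colour pair fail to be vertex-disjoint only through a bounded number of coincidences (the edge $uv$ carries one of the two colours, or the colour-$a$ and colour-$b$ neighbours of $u$ and $v$ cross). Hence if $u,v$ shared $k$ colours we would get $\binom{k}{2}$ colour-isomorphic cherry pairs of which only $O(k)$ are non-disjoint, forcing $|\mathrm{pal}(u)\cap\mathrm{pal}(v)|=O(1)$ for all $u\neq v$. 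Writing $M_a$ for the colour-$a$ matching, this yields $\sum_a\binom{|V(M_a)|}{2}=\sum_{u<v}|\mathrm{pal}(u)\cap\mathrm{pal}(v)|=O(n^2)$, and since $\sum_a|M_a|=\binom n2$, Cauchy--Schwarz gives $m\ge\binom n2^2/\sum_a|M_a|^2=\Omega(n^2)$. For a general tree $T$ one uses the rigidity of proper colourings: a copy of $T$ is determined by its colour pattern together with the image of a root vertex, so the copies realising a fixed pattern are parametrised by a subset of $V(K_n)$, and one wants to show that the absence of two disjoint copies forces these parameter sets --- and hence suitable colour-degree statistics --- to be bounded by constants depending only on $T$, after which Cauchy--Schwarz again gives $m=\Omega(n^2)$. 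I expect propagating the ``bounded overlap'' conclusion from $P_3$ up to an arbitrary tree, while tracking the vertex-disjointness requirement (plausibly by an induction that peels off leaves of $T$), to be the main obstacle in the entire theorem.

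For (iii) we must build, for bipartite $H$ with $e(H)\ge 2|H|-2$, a proper colouring of $K_n$ with $O(n)$ colours and no two disjoint colour-isomorphic copies of $H$. The plan is probabilistic: take a (near-)uniform random proper colouring with $Cn$ colours for a large constant $C$ and estimate the expected number of pairs of vertex-disjoint colour-isomorphic copies of $H$. There are at most $n^{2|H|}$ such pairs and each is colour-isomorphic (via a fixed automorphism) with probability roughly $(Cn)^{-e(H)}$, because the $e(H)$ required colour equalities are between disjoint edges; this gives an expected count of order $C^{-e(H)}n^{2|H|-e(H)}$, and it is precisely the hypothesis $e(H)\ge2|H|-2$ that makes the exponent at most $2$. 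When $e(H)\ge 2|H|$ the expectation is $O(C^{-e(H)})<1$ and one random colouring suffices; in the boundary cases $e(H)\in\{2|H|-2,2|H|-1\}$ the expectation is only $O(C^{-e(H)}n^2)$ or $O(C^{-e(H)}n)$, and here one must further exploit that in any proper colouring a fixed colour pattern of the connected graph $H$ is realised by only $O(n)$ copies, so the bad pairs are few and highly structured, and can be destroyed by recolouring a negligible family of edges with fresh colours without spoiling the $O(n)$ bound. Disposing of these boundary densities cleanly is the delicate part of (iii).

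Finally, (iv) asks for one bipartite $H$ containing a cycle realising both bounds, and this $H$ will also supply the even-cycle case of the non-forest half of (i) (via monotonicity, reducing $f_2(n,H)=O(n^{2-\Omega(1)})$ to the shortest cycle $C_\ell\subseteq H$). For the upper bound $f_2(n,H)=O(n^{2-\Omega(1)})$ the plan is an algebraic or Sidon-type colouring of $K_n$ with about $n^{2-\epsilon}$ colours in which the colour pattern of any copy of $C_\ell$ determines that copy up to bounded ambiguity, which precludes two \emph{disjoint} colour-isomorphic copies. For the lower bound $f_2(n,H)=\Omega(n^{1+\Omega(1)})$ the plan is to rerun the counting of part (i): a copy of $H$ is still determined by its colour pattern and a root image, so with $m$ colours the number of realised patterns is $\Omega(n^{|H|-1})$ while at most $m^{e(H)}$ patterns are available, and combining this with the fact that copies sharing a pattern must pairwise intersect (otherwise we are done) and a more careful accounting yields $m\ge n^{1+\Omega(1)}$ once $H$ is chosen dense enough. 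The obstacle is precisely this choice: $H$ must be dense enough that pattern-counting beats $n$, yet sparse enough (so in particular not covered by (iii)) that the algebraic construction still uses fewer than $n^{2-\Omega(1)}$ colours.
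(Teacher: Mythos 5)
Your part (ii) is correct and is essentially the paper's own argument (Theorem~\ref{thm:oddcycle}): the colouring $c(i,j)=i+j$ over an odd modulus makes the colours of an odd cycle determine its vertices, and monotonicity handles even $n$; the reduction of a forest to a containing tree is also a valid use of monotonicity. The other three parts, however, have genuine gaps, and in each case the missing ingredient is a specific tool the paper supplies. For (i), your complete argument covers only $T=P_3$. The sketched generalisation via ``a copy of $T$ is determined by its colour pattern plus a root image'' cannot give $\Omega(n^2)$: a single pattern can be realised $\Theta(n)$ times, and bounding the realisations per pattern by a constant (which is exactly what needs proving) only yields $f_2(n,T)=\Omega(n^{(m+1)/m})$ --- this is precisely the paper's Proposition~\ref{prop:infinite}, which is much weaker than $\Omega(n^2)$. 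The paper's actual proof (Theorem~\ref{thm:paths} with $k=2$) is a different argument: form an auxiliary graph on pairs of vertices of $K_n$, joined when they span a monochromatic matching of size $2$; convexity shows it has average degree $\Omega(n^2/C)$; pass to a subgraph of large minimum degree and embed $T$ vertex-by-vertex into it, using a bounded-degree hypergraph (line-graph colouring) argument to keep the underlying pairs disjoint. Nothing in your proposal supplies this step, and you yourself flag it as the main obstacle.

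For (iii), a first-moment count over pairs of disjoint copies works only when $e(H)\geq 2|H|$; in the boundary cases $e(H)\in\{2|H|-2,\,2|H|-1\}$ --- which are the whole content of the statement --- the expected number of bad pairs is $\Theta(n^2)$, resp.\ $\Theta(n)$, for any constant $C$, so ``recolouring a negligible family of edges with fresh colours'' does not go through: you may need as many new colours as bad pairs, destroying the $O(n)$ bound. (Also, a ``near-uniform random proper colouring'' is not a model in which edge colours are independent, so the probability estimate itself is unjustified.) The paper instead colours edges independently and applies the Lov\'asz Local Lemma (Theorem~\ref{thm:LLL}), where the dependency degree $O(n^{2|H|-2})$ is exactly the source of the crucial $-2$ in the exponent, and then repairs properness via boundedness and Vizing's theorem (Proposition~\ref{prop:b-bounded}). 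For (iv), your lower-bound plan provably cannot work: any $H$ admissible for (iv) must contain a cycle (a forest has $f_2=\Theta(n^2)$ by (i)), hence $e(H)\geq|H|$, and pattern-plus-root counting gives at most $m=\Omega(n^{|H|/e(H)})$, which never exceeds $\Omega(n)$; choosing $H$ ``denser'' only makes this worse. The paper's lower bound is Theorem~\ref{thm:C6} for $H=C_6$: build a bipartite auxiliary graph on pairs inside the two halves whose edges record monochromatic $2$-matchings, show it has $\Omega(|F|^{4/3})$ edges, apply the Faudree--Simonovits bound $\mathrm{ex}(n,\theta_{3,\ell})=O(n^{4/3})$ to find a theta graph $\theta_{3,60}$, and then extract two vertex-disjoint colour-isomorphic hexagons by a cleaning and pigeonhole step. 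The matching upper bound $O(n^{5/3})$, like the non-forest half of (i), also comes from Theorem~\ref{thm:LLL} rather than from an explicit algebraic colouring; note that ``the pattern determines the copy up to bounded ambiguity'' would not by itself exclude two disjoint copies in any case.
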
  

For three or more repeats, the class of graphs for which we know that $f_k(n, H) = O(n)$ grows as $k$ increases. In fact, for any graph $H$ containing a cycle, we can show, by using a variant of Bukh's random algebraic method~\cite{Bu}, that there exists $k$ such that $\fknh = O(n)$.

\begin{theorem} \label{thm:cyclim}
For any graph $H$ containing a cycle, there exists $k$ such that $\fknh = O(n)$.
\end{theorem}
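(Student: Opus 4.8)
First I would dispose of the non-bipartite case and then reduce to a single even cycle. If $H$ is non-bipartite, Theorem~\ref{thm:f2}(ii) already gives $f_2(n,H)\le n+1$, so assume $H$ is bipartite and fix an (even) cycle $C:=C_{2m}\subseteq H$. I claim that if a proper colouring of $K_n$ has no $k'$ pairwise disjoint, pairwise colour isomorphic copies of $C_{2m}$, then it has no $k't$ such copies of $H$, where $t=t(H)$ is the number of copies of $C_{2m}$ in $H$. Indeed, given pairwise disjoint, pairwise colour isomorphic copies $H_1,\dots,H_{k't}$ with embeddings $\phi_i\colon H\hookrightarrow K_n$ and colour-preserving isomorphisms $\psi_i\colon H_1\to H_i$, set $\theta_i:=\phi_i^{-1}\psi_i\phi_1\in\mathrm{Aut}(H)$; then $\phi_i(\theta_i(C))=\psi_i(\phi_1(C))$ is colour isomorphic, as a copy of $C_{2m}$, to the fixed copy $\phi_1(C)$. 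Since $\theta_i(C)$ is one of the $t$ copies of $C_{2m}$ inside $H$, by pigeonhole $k'$ of the indices $i$ have $\theta_i(C)$ equal to a common copy $C^\ast\subseteq H$, and then the $\phi_i(C^\ast)$ for $i$ in this class are pairwise disjoint and pairwise colour isomorphic copies of $C_{2m}$ — a contradiction. By the monotonicity in Remark~\ref{rem:trivial} it therefore suffices to construct, for every even $\ell=2m$ and for a set of values of $n$ meeting every interval $[N,2N]$, a proper colouring of $K_n$ with $O(n)$ colours and no $k$ disjoint colour isomorphic copies of $C_{2m}$. The naive choice, the Cayley colouring $\{x,y\}\mapsto x+y$ of $\F_q$, is hopeless here: shifting a copy along the ``bipartite kernel'' $(\delta,-\delta,\delta,\dots)$ yields $\Omega(n)$ pairwise disjoint, pairwise colour isomorphic copies of $C_{2m}$.

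To destroy that kernel I would use a nonlinear, twisted algebraic colouring in the spirit of Bukh's random algebraic method. Take $q\sim\sqrt n$ a prime power, let the vertex set be (a cofinite subset of) $\F_q^2$, so that there are $q^2=O(n)$ vertices, let the colour set be $\F_q^2$, so that there are $O(n)$ colours, and colour the edge $\{x,y\}$ by
\[
\Phi(x,y)=\bigl(x_1+y_1+\lambda x_2y_2,\ x_2+y_2+\mu x_1y_1\bigr)
\]
for suitable $\lambda,\mu\in\F_q^{\ast}$ (more generally one works in $\F_q^d$ with a random symmetric quadratic map). This $\Phi$ is symmetric, and $\Phi(x,\cdot)$ is injective whenever $\lambda\mu x_1x_2\neq1$; deleting the $O(q)$ bad vertices makes the colouring proper. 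The quadratic cross-terms are precisely what kills the bipartite kernel: substituting $v^{(i)}=u^{(i)}+\sigma_i(\delta_1,\delta_2)$ with $\sigma_i=(-1)^{i+1}$ into the colour-equality equations for $C_{2m}$, the quadratic terms force $\delta_1=\delta_2=0$ for every cycle $u$ in general position. Hence the system cutting out the colour isomorphism class of $u$ — a block of $2d\cdot 2m$ degree-$2$ equations in $2d\cdot 2m$ variables for each of the $O(1)$ automorphisms of $C_{2m}$ — is, for generic $u$, a zero-dimensional complete intersection with only $O_m(1)$ solutions, so each class has $O_m(1)$ members and there are no $k$ disjoint copies for a suitable $k=k(m)$.

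The remaining difficulty is the non-generic cycles $u$, whose colour isomorphism class could be positive-dimensional and so again supply $\Omega(q)$ disjoint copies. This is where the random algebraic method does its real work: for a generic complex choice of the twist parameters — equivalently, a random choice over $\F_q$, after transferring the relevant finite-field statements to $\mathbb{C}$ via Lang--Weil/Bukh-type point counts — the variety of ``bad'' cycles, those with an infinite colour isomorphism class, is empty or at worst of dimension low enough that the vertices it meets can be covered and deleted while still leaving $\Theta(q^2)=\Theta(n)$ vertices. Then every colour isomorphism class of $C_{2m}$ has $O_m(1)$ members; feeding this back through the reduction of the first paragraph gives $f_{k'}(n,H)=O(n)$ with $k'=k(m)\,t(H)$.

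The step I expect to be the real obstacle is this balancing act. Properness forces the colouring to be essentially a near-$1$-factorisation, which pushes it to be linear (Cayley-type); linearity revives the bipartite kernel and with it $\Omega(n)$ disjoint colour isomorphic even cycles; and keeping the palette of size $O(n)$ rules out the cheap remedy of adjoining many independent coordinates. A quadratic twist can resolve this tension, but only if one can simultaneously keep it proper on a cofinite vertex set and — the genuinely technical input — verify, via the random algebraic method, that the residual degenerate configurations form a set small enough to delete without losing more than an $o(1)$-fraction of the vertices. Getting that dimension bookkeeping right is where I would expect most of the effort to go.
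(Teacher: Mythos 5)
Your reduction to a single cycle is correct, but the pigeonhole step is unnecessary: if $H_1,\dots,H_{k}$ are vertex-disjoint and colour-isomorphic via $\psi_i\colon H_1\to H_i$, and $C\subseteq H_1$ is any fixed copy of the cycle, then the images $\psi_i(C)$ are already $k$ vertex-disjoint colour-isomorphic copies of the cycle. This is exactly what Remark~\ref{rem:trivial} (monotonicity in $H$) packages, and it is how the paper reduces.

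The real problem lies in the construction and the analysis of exceptional configurations, and you have identified it yourself at the end: the ``dimension bookkeeping'' is not a detail but the heart of the matter, and the paper takes a route that avoids it entirely. You propose a \emph{specific} degree-two twisted map $\Phi$ on $\F_q^2$ (or a random quadratic) with $q\sim\sqrt n$, and then aim to delete the vertices appearing in ``bad'' cycles whose colour-isomorphism class is positive-dimensional. Two things go wrong. First, a fixed low-degree map, or one with only $O(1)$ random coefficients, does not have enough randomness: the key probabilistic estimate one needs is $\Pr[g(y_i)=b_i\text{ for all }i=1,\dots,s]=q^{-s}$ for every collection of up to $s$ distinct points, and this kind of $s$-wise independence (Lemma~\ref{lem:23}) requires the polynomial to have degree $d\geq s-1$ with all $\Theta(d^2)$ coefficients independent and uniform. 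Second, ``delete the bad vertices'' only works if the variety $B$ of bad cycles has dimension at most $1$, so that its projection to a single vertex coordinate in $\F_q^2$ is $O(q)=o(n)$; in general the non-finite locus of the colour map on $(\F_q^2)^{2m}$ could have dimension $2$ or more, in which case its vertex projection can be all of $\F_q^2$ and there is nothing left to colour. You give no argument bounding this dimension, and none is obviously available.

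The paper sidesteps the geometry of bad configurations altogether. It takes vertices to be elements of $\F_q$ with $q=(1+o(1))n$, colours the edge $\{i,j\}$ (with $i<j$) by $g(i,j)$ for a \emph{random two-variable polynomial $g$ of large fixed degree $d$}, and then, for each fixed order-and-colour type, bounds the $t$-th moment $\E[S^t]$ of the number $S$ of copies via the $s$-wise independence above and the fact that any union of $t$ copies of a cycle has at least as many edges as vertices. It then invokes the Lang--Weil dichotomy of Lemma~\ref{lem:27} (either $S<K$ or $S>q/2$) together with Markov's inequality and a union bound over the $q^v\cdot v!$ colour-and-order types to conclude that, with high probability, every type occurs $O(1)$ times; a separate moment argument gives $O(d)$-boundedness, and Proposition~\ref{prop:b-bounded} (Vizing) refines to a proper $O(n)$-colouring. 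No vertex is ever deleted for being in a bad cycle, and no control of the dimension of a bad locus is ever needed. Your proposal is in the right spirit (Bukh's random algebraic method plus Lang--Weil), but the central analytical mechanism — moment bound plus dichotomy plus union bound over colour types — is missing, and the substitute you sketch does not obviously close the gap.
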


For trees, the situation is much more involved, as spelled out in the next theorem, whose proof relies on a mixture of novel combinatorial and algebraic methods.

\begin{theorem}\label{thm:trees}
For any tree $T$ with $m$ edges and any $k \geq 3$:
\begin{itemize}
\item[(i)] $f_k(n, T) = \Omega(n^{k/(k-1)})$. Moreover, if $T$ has at least two edges, then $f_3(n,T)= \Theta(n^{3/2})$.
\item[(ii)] $f_k(n, T) = \Omega(n^{(m+1)/m})$ and there exists $k'$ such that $f_{k'}(n, T) = O(n^{(m+1)/m})$.
\end{itemize}
\end{theorem}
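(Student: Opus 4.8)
The plan is to obtain the lower bounds by counting, the $O(n^{3/2})$ colouring in (i) by an explicit algebraic construction, and the upper bound in (ii) by a random algebraic construction à la Bukh; the delicate point, which I expect to be the main obstacle, is the lower bound $f_k(n,T)=\Omega(n^{k/(k-1)})$ for trees with many edges.

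The first ingredient is a soft counting bound. Fix a proper edge-colouring of $K_n$ with $C$ colours and a vertex $r$ of $T$. Since $T$ is connected, the colour-isomorphism type of a copy of $T$ together with the image of $r$ determines the copy: running a breadth-first search from the image of $r$, each subsequent vertex is the unique endpoint at the image of its parent of an edge of the prescribed colour, properness being exactly what makes this unique. Thus each of the at most $C^{m}$ colour-isomorphism types is realised by at most $n$ copies; moreover at most $|V(T)|$ copies of a fixed type pass through any given vertex (choose which vertex of $T$ lands there, then the rest is forced), so a type with $N$ copies contains at least $N/|V(T)|^{2}$ pairwise vertex-disjoint ones. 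As $K_n$ has $\Theta(n^{m+1})$ copies of $T$, the most popular type has $\Theta(n^{m+1})/C^{m}$ of them, and we obtain $k$ disjoint colour-isomorphic copies once $C=o(n^{(m+1)/m})$. This is the first bound of (ii), and it already gives the bound $\Omega(n^{k/(k-1)})$ of (i) whenever $m\le k-1$, since then $n^{(m+1)/m}\ge n^{k/(k-1)}$; in particular it settles $f_3(n,T)=\Omega(n^{3/2})$ for trees with exactly two edges.

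For trees with $e(T)\ge k$ the type space has size up to $C^{m}$ with $m$ large, so the soft count is too lossy, and a more delicate argument is needed; this is the crux. The idea is to build the $k$ disjoint colour-isomorphic copies of $T$ greedily along a construction ordering $u_1,\dots,u_{m+1}$ of $V(T)$ (each $u_i$ with $i\ge2$ having a unique neighbour among its predecessors), maintaining at each stage a \emph{large} family $\mathcal F_i$ of pairwise vertex-disjoint, colour-isomorphic copies of $T[\{u_1,\dots,u_i\}]$. To pass from $\mathcal F_i$ to $\mathcal F_{i+1}$ one must commit to a single colour for the edge at $u_{i+1}$; a first-moment estimate keeps a factor $n/C$ of the copies, merely reproducing the soft count, so instead one applies a Kővári--Sós--Turán/dependent-random-choice bound to the bipartite incidence between the current copies and the colours: among sufficiently many copies one can find a subfamily of prescribed size sharing many common colours at the relevant vertex — indeed, at every vertex of $T[\{u_1,\dots,u_i\}]$ simultaneously — which leaves enough slack to pick the next colour and repeat. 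The main difficulty will be bookkeeping the two competing quantities, the size of $\mathcal F_i$ and the amount of shared-colour slack, so that the dependent-random-choice step applies at each of the $m=O(1)$ stages while $C$ is still allowed to be $o(n^{k/(k-1)})$; long paths, where there is essentially no room to discard copies, will be the tightest case. Granting this one gets (i), and together with the colouring below it gives $f_3(n,T)=\Theta(n^{3/2})$ for every tree with at least two edges.

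For the matching upper bound in (i) it suffices to colour $K_n$ with $O(n^{3/2})$ colours so that no three vertex-disjoint colour-isomorphic copies of the two-edge path (``cherry'') occur: any colour-isomorphic copy of a tree $T$ with at least two edges contains a cherry, and three disjoint colour-isomorphic copies of $T$ would yield three disjoint colour-isomorphic cherries. I would take $n=q^{2}$ with $q\equiv3\pmod4$ prime (general $n$ following by monotonicity and the density of such primes), identify the vertices with $\mathbb F_q^{2}$, and colour $\{x,y\}$ by $(x+y,\langle x,y\rangle)\in\mathbb F_q^{2}\times\mathbb F_q$, a proper colouring with $q^{3}=n^{3/2}$ colours. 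A cherry with edge-colours $(u_1,z_1)$ and $(u_2,z_2)$ has centre $v$ exactly when $\langle v,u_i\rangle-\langle v,v\rangle=z_i$ for $i=1,2$; subtracting, $v$ lies on an affine line, and on that line the remaining equation is a genuine quadratic since its leading term is $-\langle d,d\rangle$ for a direction vector $d$ of the line and $d$ is never isotropic ($-1$ is a non-square modulo $q$). Hence every cherry colour-type has at most two centres, so no three colour-isomorphic cherries exist at all, and in particular no three disjoint colour-isomorphic copies of $T$.

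Finally, for the upper bound in (ii) I would use a random algebraic colouring. Take a prime power $q$ with $q^{m}=\Theta(n)$ and $q^{m}\ge n$, identify $\mathbb F_q^{m}$ with the field $\mathbb F_{q^{m}}$, pick a uniformly random symmetric polynomial $P$ of degree at most $d$ in the $2m$ coordinates (a constant $d=d(T)$), and colour the edge $\{x,y\}$ of $K_{q^{m}}$ by $\bigl(x+y,\,P(x,y)\bigr)$, with $+$ field addition; this uses at most $q^{m+1}=O(n^{(m+1)/m})$ colours and is proper because of the first coordinate. If $\phi$ and $\psi$ are colour-isomorphic copies of $T$ via an automorphism $\sigma$, then matching first coordinates along all edges and using that the tree $T$ is bipartite with parts $A\sqcup B$ forces $\psi$ to equal $\phi\circ\sigma^{-1}$ translated by a fixed field element $t$ on $A$ and by $-t$ on $B$; matching the second coordinates then places $t$ in the zero set $Z_{\phi,\sigma}\subseteq\mathbb F_{q^{m}}$ of a system of $m=e(T)$ bounded-degree polynomial equations in the $m$ coordinates of $t$, one per edge. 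This is a square system, so for generic data $Z_{\phi,\sigma}$ is zero-dimensional with $O_d(1)$ points by Bézout. The key point, where Bukh's random-algebraic lemma enters, is that a uniformly random bounded-degree polynomial is generic enough that $Z_{\phi,\sigma}$ is zero-dimensional for every copy $\phi$ and every $\sigma$ with failure probability $q^{-\Omega(1)}$ per copy, which, after taking $d$ large enough, survives a union bound over the $\Theta(n^{m+1})=q^{O(1)}$ copies. Hence with positive probability every colour-isomorphism class of $T$ has at most $|\mathrm{Aut}(T)|\cdot O_d(1)=:k'-1$ members, so the colouring has no $k'$ colour-isomorphic copies of $T$ at all, let alone $k'$ disjoint ones, and $f_{k'}(n,T)=O(n^{(m+1)/m})$ after restricting $K_{q^{m}}$ to $K_n$.
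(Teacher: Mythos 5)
Your proposal correctly reproduces the $\Omega(n^{(m+1)/m})$ counting bound (this is essentially the paper's Proposition~\ref{prop:infinite}), and your explicit $\mathbb{F}_q^2$-colouring of $\{x,y\}$ by $(x+y,\langle x,y\rangle)$ with $q\equiv 3\pmod 4$ is a valid alternative to the paper's Vandermonde/quadratic construction for the $O(n^{3/2})$ upper bound in (i); both amount to ensuring each pair of colours is incident to at most two common vertices. However, there is a genuine gap at the step you yourself flag as ``the crux'': the lower bound $f_k(n,T)=\Omega(n^{k/(k-1)})$ when $m\geq k$. You describe a dependent-random-choice scheme for passing from partial repeated copies $\mathcal{F}_i$ to $\mathcal{F}_{i+1}$, but you do not carry out the bookkeeping, explicitly acknowledge that the competing parameters are delicate, and say ``granting this one gets (i)''. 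That is not a proof, and without it, the second assertion of (i) ($f_3(n,T)=\Theta(n^{3/2})$ for \emph{all} trees with at least two edges) is also only established for $m\leq 2$: monotonicity in $H$ (Remark~\ref{rem:trivial}) gives $f_3(n,T)\leq f_3(n,P_2)$, which is the wrong direction for the lower bound. The paper's argument here is structurally different and self-contained: it builds an auxiliary graph $F$ on $\binom{V(G)}{k}$, joining two disjoint $k$-sets whenever they span a monochromatic $k$-matching, uses convexity to lower bound $e(F)$ and hence the minimum degree of a subgraph $F'$, and then greedily embeds $T$ into $F'$ while ensuring that the underlying $k$-sets stay pairwise disjoint (via a bounded-degree hypergraph matching argument at each step). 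I would encourage you to try to fill your sketch, but you should be aware that the published proof does not run through dependent random choice.

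Your random-algebraic construction for the upper bound in (ii) is also structurally different from the paper's and is left with some loose ends. The paper uses $m+1$ \emph{independent} random $2m$-variable polynomials together with a fixed ordering $\prec$ on $\mathbb{F}_q^m$ (colouring $ij$, $i\prec j$, by $(g_1(i,j),\dots,g_{m+1}(i,j))$), which lets it apply the random-polynomial interpolation lemma verbatim, at the cost of tracking ``order types''. You instead take a single \emph{symmetric} random polynomial and a group-translation first coordinate, which is elegant, but two points need real work: (a) the interpolation lemma (Lemma~\ref{lem:23}) is stated for a uniformly random polynomial of bounded degree evaluated at distinct points, and for a symmetric $P$ the evaluations at $(x,y)$ and $(y,x)$ are not independent, so some care is needed that your point configurations never contain such a pair (or a symmetric analogue of the lemma must be proved); and (b) the claim that the square system defining $Z_{\phi,\sigma}$ is generically zero-dimensional with failure probability small enough to survive a union bound over $q^{\Theta(m^2)}$ copies is asserted rather than proved -- the paper establishes the analogous fact by bounding $\E[S^t]$ via the interpolation lemma and combining it with the Lang--Weil dichotomy (Lemma~\ref{lem:27}) and Markov, and some such moment argument is needed here too.
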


The paper is structured as follows. In Section~\ref{sec:prelim}, we collect some general upper and lower bounds for $\fknh$. We then proceed to study trees in Section~\ref{sec:trees}. In Section~\ref{sec:bipartite}, we study even cycles. We conclude, in Section~\ref{sec:outlook}, by discussing a broad range of open problems suggested by our work. Unless otherwise specified, the term `colouring' will refer to proper edge-colourings of a complete graph. 

\section{Preliminary Observations}\label{sec:prelim}

Since it is not entirely obvious, we first verify that finding $\fknh$ is indeed an extremal problem.
\begin{lemma}
	For any $k,n,H$ and $\binom{n}{2}\geq C> \fknh$, there exists a $C$-colouring of $K_n$ which does not contain $k$ repeats of $H$.
\end{lemma}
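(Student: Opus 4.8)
The plan is to start from an optimal colouring and repeatedly refine it, one edge at a time, until the desired number of colours is reached, exploiting the fact that refining a proper colouring cannot create new colour-isomorphic copies.

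By definition of $f_k(n,H)$ there is a proper edge-colouring $\chi$ of $K_n$ using exactly $f := f_k(n,H)$ colours and containing no $k$ pairwise vertex-disjoint colour-isomorphic copies of $H$. Since $C > f$, it suffices to prove the following one-step claim: whenever we have a proper colouring of $K_n$ with $c$ colours, no $k$ repeats of $H$, and $c < \binom{n}{2}$, there is a proper colouring with $c+1$ colours and still no $k$ repeats of $H$. Iterating this $C-f$ times produces the required $C$-colouring; the iteration is legitimate because the colour count increases by exactly $1$ at each step and never exceeds $C \le \binom{n}{2}$, so the hypothesis $c < \binom{n}{2}$ remains available at every stage.

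For the single step, since $c < \binom{n}{2}$ the pigeonhole principle gives a colour class $A$ with $|A| \ge 2$; pick an edge $e \in A$ and recolour $e$ with a brand-new colour, obtaining a colouring $\chi'$ with $c+1$ colours. Then $\chi'$ is still proper: $A \setminus \{e\}$ remains a matching, $\{e\}$ is a matching, and the new colour appears nowhere else. Moreover $\chi'$ is a \emph{refinement} of the original colouring, in the sense that the original colour of any edge is a function of its $\chi'$-colour (merging the singleton new colour back into $A$). Hence any colour-preserving isomorphism between copies of $H$ under $\chi'$ is also colour-preserving under the coarser colouring, so $k$ pairwise vertex-disjoint colour-isomorphic copies of $H$ in $\chi'$ would give $k$ such copies in the coarser colouring, contradicting our assumption. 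Therefore $\chi'$ has no $k$ repeats of $H$, which completes the inductive step and the proof.

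The argument is essentially routine; the only point requiring a moment's care is the observation that passing to a refinement can only destroy, never create, colour-isomorphic copies, which becomes immediate once phrased via the "coarse colour is a function of the fine colour" formulation. I do not anticipate any genuine obstacle.
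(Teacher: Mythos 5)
Your proposal is correct and follows essentially the same approach as the paper: start from an optimal colouring and split off edges from large colour classes into brand-new singleton colours, noting that this operation cannot create new repeats. The only cosmetic differences are that you proceed one edge at a time (rather than recolouring all $C - f_k(n,H)$ edges at once) and that you phrase the key observation via ``refinement preserves colour-isomorphism in the coarse direction'' rather than the paper's argument that vertex-disjoint repeats cannot share a singleton new colour class; both formulations are valid.
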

\begin{proof}
Suppose for contradiction that the statement does not hold for some $C>\fknh$ and consider an $\fknh$-colouring of $K_n$ which avoids $k$ repeats of $H$. In this colouring, take any $C-\fknh$ edges from colour classes of size at least $2$, leaving at least one edge in each colour class and recolour each of these edges with an entirely new colour. By assumption, the resulting $C$-colouring contains $k$ repeats of $H$. Since each new colour class has only one edge, the repeats must be in the old colours, a contradiction. 
\end{proof}

The following lemma effectively reduces the problem to connected graphs. 

\begin{lemma}\label{lem:components}
Suppose that $H$ is a graph with connected components $H_1,\dots, H_\ell$. Then, for all $k\geq 2$,
\begin{equation*}\label{eq:components}
\fknh=\Theta(\min_{i\in [\ell]}\{f_k(n,H_i)\}).
\end{equation*}
\end{lemma}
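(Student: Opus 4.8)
The plan is to obtain the upper bound for free from monotonicity and to get the matching lower bound by assembling a $k$‑fold repeat of $H$ out of $k$‑fold repeats of the individual components. For the upper bound, since each $H_i$ is a subgraph of $H$, Remark~\ref{rem:trivial} gives $\fknh \le f_k(n,H_i)$ for every $i$, and hence $\fknh \le \min_{i\in[\ell]} f_k(n,H_i)$. For the lower bound I want to show that any colouring of $K_n$ using noticeably fewer than $\min_{i} f_k(n,H_i)$ colours already contains $k$ repeats of $H$. Throughout one may discard any components that are single vertices, since for large $n$ these can be placed on arbitrary spare vertices and do not affect $\fknh$; so assume each $H_i$ has an edge, and write $h=|H|$.

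The heart of the lower bound is a greedy assembly. Given a $C$‑colouring of $K_n$, process $H_1,\dots,H_\ell$ in turn: when it is $H_i$'s turn, delete the at most $kh$ vertices already used by earlier components and apply the definition of $f_k$ to the complete graph on the remaining $\ge n-kh$ vertices, extracting $k$ pairwise vertex‑disjoint, pairwise colour‑isomorphic copies of $H_i$ that are disjoint from everything chosen before. This step succeeds as long as $C<f_k(n-kh,H_i)$ (using that $f_k$ is monotone in $n$). After $\ell$ rounds, stitching together the $j$‑th copies of $H_1,\dots,H_\ell$ for each $j\in[k]$ produces $k$ vertex‑disjoint copies of $H$, and because the $H_i$ are vertex‑disjoint the colour‑preserving isomorphisms between corresponding pieces combine into colour‑preserving isomorphisms of the full copies. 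Thus $C<\min_i f_k(n-kh,H_i)$ already forces $k$ repeats of $H$, i.e. $\fknh\ge \min_i f_k(n-kh,H_i)$.

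The one genuinely non‑trivial point — and the main obstacle — is that this bound evaluates $f_k(\cdot,H_i)$ at $n$ minus a bounded number of vertices, and I need this to be within a constant factor of $f_k(n,H_i)$; the crude bounds $n-1\le f_k\le\binom n2$ are not enough when $f_k(n,H_i)$ is itself of order $n$. I would close this with a short stability estimate: for any connected graph $G$ on at least two vertices, $f_k(n+1,G)\le f_k(n,G)+n$, proved by taking an optimal colouring of $K_n$ and adjoining one new vertex whose star is coloured with $n$ brand‑new colours. Any copy of $G$ through the new vertex then uses a colour lying on a single edge, so it cannot belong to a family of $k$ pairwise disjoint colour‑isomorphic copies; hence any such family lies entirely in the old $K_n$, which is impossible. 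Iterating this $O(1)$ times and using $f_k(n-O(1),G)=\Omega(n)$ (Remark~\ref{rem:trivial}) to absorb the additive $O(n)$ loss gives $f_k(n-kh,H_i)=\Omega(f_k(n,H_i))$, so that $\min_i f_k(n-kh,H_i)=\Omega(\min_i f_k(n,H_i))$, which together with the upper bound yields $\fknh=\Theta(\min_i f_k(n,H_i))$.
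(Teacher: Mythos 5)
Your argument is correct and follows essentially the same route as the paper's: both prove the lower bound by greedily extracting $k$-repeats of the components on pairwise disjoint vertex sets, and both control the resulting shift from $n$ to $n - O(1)$ via the same extension trick (adjoin the missing vertices and give each new edge a brand-new colour, so no $k$-repeat can touch a new edge), absorbing the additive $O(n)$ loss using $f_k \ge n-1$. The paper packages this as an induction on the number of components with a short case analysis at the end, whereas you run the greedy in one pass and isolate the extension step as a stability estimate $f_k(n+1,G)\le f_k(n,G)+n$, but the underlying ideas coincide.
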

	
\begin{proof}
Since containing no $k$-fold repeat of one of the $H_i$ implies that there is no $k$-fold repeat of $H$, we immediately have 
\begin{equation}\label{eq:trivialmin}
\fknh\leq \min_{i\in [\ell]}\{f_k(n,H_i)\}.
\end{equation}
For the other direction, let $k\geq 2$ be fixed. We will prove the result by induction on $\ell$, noting first that the statement is obviously true for $\ell=1$. Given $\ell\geq 2$, $H$ and $n$, we may assume, without loss of generality, that 
$$f_k(n,H_\ell)=\min_{i\in [\ell]}\{f_k(n,H_i)\}$$
and let $H':=H\setminus H_\ell$. By the induction hypothesis, there exists a constant $c=c(k,\ell, H_1,\dots,H_{\ell-1})$ such that 
$$f_k(n, H')\geq c\cdot \min_{i\in[\ell-1]}f_k(n,H_i)\geq c \cdot f_k(n,H_\ell).
$$
Observe now that  
\begin{equation}\label{eq:mincomp}
f_k(n,H)\geq \min\{f_k(n,H_\ell),f_k(n-k|H_\ell|,H')\},
\end{equation}
as otherwise we can find a $k$-repeat of $H_\ell$ and then a $k$-repeat of $H'$ on the remaining vertices, resulting in a $k$-repeat of $H$. If $f_k(n,H_\ell)\leq f_k(n-k|H_\ell|,H')$, then, by~\eqref{eq:mincomp} and ~\eqref{eq:trivialmin}, $f_k(n,H)=f_k(n,H_\ell)$ and we are done. So suppose now that $f_k(n,H_\ell)> f_k(n-k|H_\ell|,H')$, in which case~\eqref{eq:mincomp} becomes
\begin{equation}\label{eq:amostfull}
f_k(n,H)\geq f_k(n-k|H_\ell|,H').
\end{equation}
The right-hand side of~\eqref{eq:amostfull} is at least $f_k(n,H')-k|H_\ell|n$, as can be seen by extending a colouring of the complete graph on $n-k|H_\ell|$ vertices containing no $k$-repeat of $H'$ to a colouring of $K_n$ with the same property by using a new unique colour for each edge (for a total of at most $k|H_\ell|n$ new colours). 
Therefore,
\begin{equation}\label{eq:split_fk}
f_k(n,H)\geq f_k(n-k|H_\ell|,H')\geq f_k(n,H')-k|H_\ell|n\geq c\cdot f_k(n,H_\ell)-k|H_\ell|n.
\end{equation}
If $2(k/c)|H_\ell|n<f_k(n,H_\ell)$, then, by~\eqref{eq:split_fk},
$$f_k(n,H)\geq c f_k(n,H_\ell)-\frac{c}{2}f_k(n,H_\ell)=\frac{c}{2}f_k(n,H_\ell).
$$ 
On the other hand, by Remark~\ref{rem:trivial}, we have $f_k(n,H)\geq n-1\geq n/2$. Hence, if $ 2(k/c)|H_\ell|n\geq f_k(n,H_\ell)$, then
$$f_k(n,H)\geq \frac{n}{2}=\frac{2(k/c)|H_\ell|n}{4(k/c)|H_\ell|}\geq \frac{c}{4k|H_\ell|}f_k(n,H_\ell),
$$
completing the induction step and the proof.
\end{proof}

Next, we quickly resolve the case of non-bipartite graphs, proving Theorem~\ref{thm:f2}(ii). 

\begin{theorem}\label{thm:oddcycle}
	If $H$ contains an odd cycle, then $\fknh \leq n+1$.
\end{theorem}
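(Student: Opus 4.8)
The plan is to exhibit, for every $n$, an explicit proper edge-colouring of $K_n$ using only $n+1$ colours in which no single colour class can contain an odd cycle; since every copy of a graph $H$ with an odd cycle must in particular contain an odd cycle of edges, and two colour-isomorphic copies would need corresponding colours, it suffices to ensure that the colouring is so ``rigid'' that no colour-preserving isomorphism between two disjoint copies of $H$ can exist. In fact the cleanest route is to find a proper colouring with $n+1$ colours in which \emph{every} colour-isomorphic pair of subgraphs isomorphic to an odd cycle is forbidden, or, even better, one whose automorphism structure is trivial enough that disjoint colour-isomorphic copies of an odd cycle simply cannot occur.

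Concretely, I would use a near-one-factorisation of $K_n$. When $n$ is even, $K_n$ decomposes into $n-1$ perfect matchings $M_1,\dots,M_{n-1}$; colour $M_i$ with colour $i$. This is a proper colouring with $n-1$ colours, but it contains many repeated odd structures, so it must be refined. The key observation to exploit is that in such a factorisation one can identify a ``special'' vertex or a distinguished small set of edges and give those a handful of extra colours (this is where the budget $n+1 = (n-1) + 2$ comes from) so as to break all symmetry. For $n$ odd one uses the standard near-one-factorisation into $n$ matchings, each missing exactly one vertex, which already has $n$ colours and leaves one colour of slack. The heart of the argument will be to choose the extra one or two colours so that any odd cycle in the colouring is forced to pass through a uniquely-coloured edge (or a uniquely-determined vertex), so no second disjoint copy colour-isomorphic to it can exist.

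I would carry this out in the following steps: (1) fix the near-one-factorisation of $K_n$, obtaining $n-1$ or $n$ matching colours; (2) recolour a carefully chosen small set of edges---say a Hamilton path or a spanning structure incident to one vertex---with one or two genuinely new colours, checking that propriety is preserved; (3) argue that in the resulting colouring any odd cycle, and hence any copy of $H$, must use an edge from a ``rare'' colour class, where ``rare'' means the class is so small or so localised that two vertex-disjoint copies cannot both meet it; (4) conclude that no two vertex-disjoint colour-isomorphic copies of $H$ exist, and count colours to verify the total is at most $n+1$. A cleaner variant, which I suspect is what the authors do, is to use an algebraic colouring: identify the vertices of $K_n$ with a group (or $\mathbb{Z}_n$, or $\mathbb{F}_q$) and colour $xy$ by $x+y$; this is proper when $n$ is odd, uses $n$ colours, and has the feature that the colour of an edge together with one endpoint determines the other, so a colour-preserving isomorphism between two cycles is forced to be a translation---and one then shows a translation cannot send a cycle to a vertex-disjoint copy of itself while preserving the sum-colouring on an odd cycle, because summing the colours around an odd cycle recovers (twice) the vertex sum, pinning down the translate.

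The main obstacle, and the step I would spend the most care on, is \textbf{step (3)}: ruling out \emph{all} colour-isomorphic disjoint pairs, not just congruent-by-symmetry ones. A colour-isomorphism between two copies of $H$ need not extend to an automorphism of the whole colouring, so I cannot simply invoke rigidity of the colouring globally; I must trace how the colours along an odd cycle constrain its location. With the $x\mapsto x+y$ colouring this becomes a clean parity/summation argument over $\mathbb{Z}_n$ with $n$ odd (summing colours around an odd closed walk and using that $2$ is invertible), so I would adopt that framework; the only remaining bookkeeping is handling $n$ even (pass to $K_{n-1}$ plus one apex vertex coloured with one extra colour, absorbing the apex edges, for a total of $(n-1)+1+1 = n+1$ colours) and confirming the copies of $H$ using the apex are also blocked, which follows because any two vertex-disjoint copies can use the single apex at most once.
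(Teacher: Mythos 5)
You land on the paper's construction: for $n$ odd, identify $V(K_n)$ with $\mathbb{Z}_n$ and colour $\{a,b\}$ by $a+b \pmod n$, reducing to $H = C_{2\ell+1}$. But your rigidity argument has a real gap. You claim a colour-preserving isomorphism between two copies of $C_{2\ell+1}$ is ``forced to be a translation'' and propose to kill it by summing the colours around the cycle. Neither part holds. If $v_0,\dots,v_{2\ell}$ and $w_0,\dots,w_{2\ell}$ satisfy $w_i+w_{i+1}=v_i+v_{i+1}$ for all $i$, then $w_i - v_i = (-1)^i(w_0-v_0)$ alternates in sign, which is not a translation. And the plain sum $\sum_i c_i = 2\sum_i v_i$ only constrains the vertex \emph{sum}; even under your (false) translation hypothesis it would give $(2\ell+1)\delta \equiv 0 \pmod n$, which does not force $\delta=0$ when $\gcd(2\ell+1,n)>1$. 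What actually works, and what the paper uses, is the \emph{alternating} sum: $c_0 - c_1 + c_2 - \cdots + c_{2\ell} = 2v_0$, so since $2$ is invertible mod odd $n$, the colour sequence determines $v_0$ outright, and likewise every $v_i$. Hence no two distinct copies of $C_{2\ell+1}$ (disjoint or otherwise) are colour-isomorphic, giving $\fknh \leq n$ for $n$ odd.

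Your handling of even $n$ is also broken: attaching an apex vertex to the $\mathbb{Z}_{n-1}$-coloured $K_{n-1}$ and giving its edges ``one extra colour'' cannot be a proper colouring, since the apex has degree $n-1$ and so needs $n-1$ distinct colours on its incident edges. (One can extend the near-one-factorisation of $K_{n-1}$ to a one-factorisation of $K_n$ with the same $n-1$ colours, but the paper explicitly remarks in Section 5 that it is open whether this extension avoids repeated odd cycles, so this route is not free.) The paper sidesteps this entirely by monotonicity in $n$: $f_k(n,H) \leq f_k(n+1,H) \leq n+1$ since $n+1$ is odd. You should do the same.
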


\begin{proof}
It suffices to show this for $k=2$ and $H = C_{2 \ell+1}$. To this end, consider the following well-known colouring $c$. Suppose that $n=2p+1$ is an odd number, let $V(G)=\Z_n$ and put $c(a,b):=a+b \mod n$. Clearly this is a proper colouring using $n$ colours. Moreover, whenever there is a copy of $C_{2\ell+1}$ with vertices $v_0,\dots,v_{2\ell+1}=v_0$, labelled along the cycle, we have $c_i:=c(v_i,v_{i+1})=v_i+v_{i+1} \mod n$ for each $i$. The vertices $v_0, \dots , v_{2\ell}$ are then uniquely determined by the colours $c_0,\dots,c_{2\ell}$, since $v_0=(c_0-c_1+c_2-\dots +c_{2\ell})/2 \mod n$ and a similar identity holds for the other $v_i$. Hence, no two distinct, let alone vertex-disjoint, cycles will be colour isomorphic. This proves that $\fknh\leq n$ when $n$ is odd. For even $n$, monotonicity implies that $\fknh\leq f_k(n+1,H)\leq n+1$. 
\end{proof}	

For odd $n$, the proof of Theorem~\ref{thm:oddcycle} gives $f_k(n,H)\leq n$, which is best possible, as this is the minimum number of colours in any proper colouring when $n$ is odd. For even $n$, the bounds in Remark~\ref{rem:trivial} and Theorem~\ref{thm:oddcycle} differ by $2$. We refer the reader to Section~\ref{sec:outlook} for further discussion. 
	
For upper-bound constructions, the following observation will be very useful. A not necessarily proper edge-colouring of $K_n$ is called \emph{$b$-bounded} if each colour class is a graph of maximum degree at most $b$. 

\begin{prop}\label{prop:b-bounded}
For any graph $H$ and integers $b,k,n\geq 2$, any $b$-bounded $C$-colouring of $K_n$ without $k$ repeats of any (not necessarily proper) colouring of $H$ can be refined to a proper colouring with at most $(b+1)C$ colours and no $k$-repeat of $H$.  	
\end{prop}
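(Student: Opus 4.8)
The plan is to take a $b$-bounded $C$-colouring $c$ of $K_n$ with no $k$ repeats of $H$ and use the classical fact that a graph of maximum degree at most $b$ has chromatic index at most $b+1$ (Vizing's theorem) to split each colour class into at most $b+1$ proper matchings. Concretely, for each colour $\alpha$ used by $c$, let $G_\alpha$ be the spanning subgraph of $K_n$ consisting of the edges coloured $\alpha$; by hypothesis $\Delta(G_\alpha)\leq b$, so by Vizing's theorem we may properly edge-colour $G_\alpha$ with colours from a palette $\{1,\dots,b+1\}$. Define a new colouring $c'$ of $K_n$ by setting $c'(e) = (\alpha, j)$ whenever $c(e)=\alpha$ and $e$ receives colour $j$ in the chosen proper edge-colouring of $G_\alpha$. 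The total number of colours used by $c'$ is at most $(b+1)C$, as required.

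The next step is to check that $c'$ is proper. If two edges $e, f$ share a vertex, then either $c(e)\neq c(f)$, in which case their first coordinates differ and so $c'(e)\neq c'(f)$; or $c(e)=c(f)=\alpha$, in which case $e$ and $f$ both lie in $G_\alpha$ and share a vertex, so the proper edge-colouring of $G_\alpha$ assigns them distinct colours $j$, giving distinct second coordinates. Either way $c'(e)\neq c'(f)$, so $c'$ is a proper edge-colouring.

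Finally, I would verify that $c'$ inherits the no-$k$-repeat property. The key point is that $c'$ \emph{refines} $c$ in the sense that $c'(e)=c'(f)$ implies $c(e)=c(f)$: equality of the first coordinates is exactly equality of the original colours. Hence any colour-isomorphism between two copies of $H$ with respect to $c'$ is also a colour-isomorphism with respect to $c$ (map each edge to its partner; if the $c'$-colours agree, so do the $c$-colours). Therefore $k$ vertex-disjoint copies of $H$ that are pairwise colour-isomorphic in $c'$ would also be pairwise colour-isomorphic in $c$, contradicting the assumption on $c$ — and note this works regardless of whether the copies of $H$ are rainbow, matching the statement's "not necessarily proper colouring of $H$" phrasing.

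I do not expect a serious obstacle here; the only thing to be careful about is citing the right form of Vizing's theorem (it applies to the not-necessarily-simple... actually $K_n$ is simple so each $G_\alpha$ is simple, and the bound is $\Delta+1$), and making explicit that the refinement direction of the colour relation is the one that matters for transporting the no-repeat property downward. If one wanted to avoid invoking Vizing, one could instead use the weaker and elementary bound that a graph of maximum degree $b$ is properly edge-colourable with $2b-1$ colours (greedily), at the cost of replacing $(b+1)C$ by $(2b-1)C$; since the proposition as stated claims the sharper constant, Vizing is the natural tool.
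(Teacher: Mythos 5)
Your proposal is correct and follows the same route as the paper: apply Vizing's theorem within each colour class to split it into at most $b+1$ matchings, and observe that since the new colouring refines the old one, any $k$-repeat of $H$ in the new colours would yield a $k$-repeat of some (not necessarily proper) colouring of $H$ in the old colours. Your write-up is simply a more detailed version of the paper's two-sentence argument.
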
 

\begin{proof}
Since each colour class is a graph of maximum degree $b$, Vizing's theorem implies that the edges in each colour class can be recoloured with at most $b + 1$ new colours so that the new colouring is proper. A $k$-repeat of $H$ in the new colours would imply the same in the old colours and is therefore impossible.
\end{proof}	

The next theorem provides, via a simple probabilistic argument, a general upper bound for $\fknh$. In light of Theorem~\ref{thm:oddcycle}, it is only meaningful for bipartite $H$. 

\begin{theorem}\label{thm:LLL}
	For any graph $H$ with $v$ vertices and $e$ edges,
	$$\fknh=O(\max\{n,n^{\frac{kv-2}{(k-1)e}}\}).$$
\end{theorem}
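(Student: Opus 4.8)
The plan is to use a $b$-bounded random colouring together with Proposition~\ref{prop:b-bounded}, so that it suffices to produce a $b$-bounded colouring with the claimed number of colours and no $k$-repeat of any (possibly improper) copy of $H$. Fix $C$ to be determined, and colour each edge of $K_n$ uniformly and independently from a palette of $C$ colours. With high probability (or by deleting a few bad edges) this colouring is $O(n^2/C)$-bounded provided $C \geq n$, so once $C \geq n$ the boundedness parameter $b = O(n^2/C)$ is a constant-independent-of-the-argument only in the regime $C = \Theta(n^2)$; in general we will just carry $b$ along and absorb the factor $(b+1)$ at the end, noting $b = O(n)$ always. (If one prefers a cleaner statement, one can instead take $b$ a fixed large constant and a palette of size $C = \Theta(n^2/b)$, which changes nothing up to constants.)

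The main step is a first-moment calculation. A potential $k$-repeat of $H$ is a choice of $k$ vertex-disjoint copies $G_1, \dots, G_k$ of $H$ in $K_n$ together with colour-preserving isomorphisms among them; the number of ways to place the $k$ ordered labelled copies is $O(n^{kv})$. Fix such a configuration. The event that $G_1, \dots, G_k$ are pairwise colour-isomorphic via the prescribed isomorphisms forces, for each of the $e$ edges of $H$, the $k$ corresponding edges (one in each $G_i$) to receive a common colour; since these $ke$ edges are distinct, this happens with probability $C^{-(k-1)e}$. Hence the expected number of $k$-repeats is $O(n^{kv} C^{-(k-1)e})$, and this is $o(1)$ as soon as $C \gg n^{kv/((k-1)e)}$. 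Choosing $C = \Theta(\max\{n, n^{kv/((k-1)e)}\})$ and applying Proposition~\ref{prop:b-bounded} then yields a proper colouring with $O((b+1)C) = O(n \cdot C)$ colours and no $k$-repeat of $H$.

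The one remaining wrinkle — and the place where the exponent $\frac{kv-2}{(k-1)e}$ rather than $\frac{kv}{(k-1)e}$ comes from — is that the count $O(n^{kv})$ of configurations overcounts badly relative to the true degrees of freedom: if $H$ is connected then a labelled copy of $H$ is determined by its vertex set plus the embedding, and one should fix an edge and grow the copy, so that the number of copies of $H$ through a given pair of vertices is $O(n^{v-2})$ and $H$ itself has $O(n^v)$ copies but the relevant enumeration for the union of $k$ copies, once we account for the fact that colour-isomorphism is not affected by relabelling within each copy's automorphism group and that we only need one canonical isomorphism, costs $O(n^{kv-2})$ rather than $O(n^{kv})$ — concretely, one fixes the colour of one edge-class of $H$ (losing a factor $C$, equivalently two powers of $n$ in the worst regime) or, more cleanly, observes that the $k$ copies need only be counted up to the overall freedom of one pair of endpoints, i.e.\ the first copy can be rooted at a fixed edge. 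Carrying this bookkeeping through replaces $n^{kv}$ by $n^{kv-2}$ in the first-moment bound, giving expected number of $k$-repeats $O(n^{kv-2} C^{-(k-1)e})$, which is $o(1)$ for $C \gg n^{(kv-2)/((k-1)e)}$, as desired. The main obstacle is making this last reduction rigorous and checking that the $(b+1)$ factor from Proposition~\ref{prop:b-bounded} is genuinely absorbed by the $\max\{n, \cdot\}$ and does not worsen the exponent; I expect this to be routine but it is the only non-mechanical point.
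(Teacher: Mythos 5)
There is a genuine gap: the first-moment argument cannot achieve the stated exponent, and your attempted ``bookkeeping'' repair is not valid. The number of ordered $k$-tuples of vertex-disjoint labelled copies of $H$ in $K_n$ really is $\Theta(n^{kv})$; nothing about automorphism groups or ``rooting the first copy at a fixed edge'' changes this, because a first-moment calculation has to sum over \emph{all} potential bad configurations, not a rooted subfamily. With $C$ colours the expected number of $k$-repeats is $\Theta(n^{kv} C^{-(k-1)e})$, and requiring this to be $o(1)$ forces $C = \omega(n^{kv/((k-1)e)})$. In typical cases this is worse than $\binom{n}{2}$ and hence vacuous: e.g.\ for $H=C_4$, $k=2$ you would need $C\gg n^2$, while the theorem promises $O(n^{3/2})$. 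The improvement from $n^{kv/((k-1)e)}$ to $n^{(kv-2)/((k-1)e)}$ is exactly the gain from replacing ``total expected count'' by ``number of \emph{dependent} events,'' which is $O(n^{kv-2})$ because two bad $k$-tuples are independent unless they share an edge. That is precisely the Lov\'asz Local Lemma, and it is what the paper uses; there is no way to simulate it with a pure expectation bound.

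There is a second, related problem with the boundedness step. To invoke Proposition~\ref{prop:b-bounded} usefully you need the intermediate colouring to be $b$-bounded for $b$ a \emph{constant}, but a uniformly random colouring with $C\ll n^2$ colours has colour classes of typical degree $\Theta(n^2/C)\to\infty$, so the factor $(b+1)$ you ``carry along'' is polynomial in $n$ and genuinely worsens the exponent; it is not absorbed into the $O(\cdot)$. The paper handles this simultaneously inside the LLL: alongside the repeat events $A(H_1,\dots,H_k)$ it introduces star events $B(S)$ (monochromatic stars with $kv-1$ edges), so the Local Lemma produces a colouring that is both repeat-free and $(kv-2)$-bounded; the star events are what force the constraint $p\lesssim n^{-1}$, i.e.\ the $\max\{n,\cdot\}$ term in the statement. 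To repair your proof you would need to switch from the first moment to the Local Lemma and add the star events, at which point it becomes the paper's proof.
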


\begin{proof}
	This is a standard application of the Lov\'asz Local Lemma (see, for instance,~\cite{AlSp}). 
	We would like to find a probability $p=p(n)$ and assign each edge of $K_n$ independently   one of $1/p$ colours so that with positive probability the following will hold:
	\begin{itemize}
		\item[(P1)] There is no $k$-fold repeat of any (not necessarily proper) colouring of $H$. 
		\item[(P2)] The colouring is $(kv-2)$-bounded.
	\end{itemize}
	For this, we define two collections of `bad' events, as follows. 
	Given $k$ disjoint copies of $H$ in $K_n$, say $H_1,\dots,H_k$, let $A(H_1,\dots, H_k)$ be the event that $H_1,\dots,H_k$ are colour isomorphic. We have $$q_A:=\Prob(A(H_1,\dots,H_k))=\Theta(p^{(k-1)e}).$$
	For every copy $S$ of $S_\ell$, the star with $\ell := kv-1$ edges, we define $B(S)$ to be the event that all edges of $S$ receive the same colour. We have $$q_B:=\Prob(B(S))=p^{\ell-1}.$$  
	
	For every fixed $H_1,\dots,H_k$, the event $A(H_1,\dots,H_k)$ is independent of any event where the subgraphs involved do not share an edge with any of $H_1,\dots, H_k$. Therefore, the number of dependent events is at most
	$$\Delta = O(n^{kv-2} +n^{(\ell+1)-2}) = O(n^{\ell-1}).$$ 
	Similarly, each $B(S)$ is independent from all but at most $\Delta = O(n^{\ell-1})$ other events. 
	By the Local Lemma, in order to establish that $\Prob(\bigcap \overline{A}\cap \bigcap \overline{B})>0$, it suffices to show that $\max\{q_A,q_B\} < 1/e\Delta$. This will be the case when 
	$$p \leq \gamma \min\{n^{-1}, n^{-\frac{\ell-1}{(k-1)e}}\}$$
	for some constant $\gamma=\gamma(k,v,e)>0$. Therefore, there exists a $(kv-2)$-bounded colouring with $O(\max\{n,n^{\frac{kv-2}{(k-1)e}}\})$ colours satisfying (P1). By Proposition~\ref{prop:b-bounded}, it can be refined to a proper $O(\max\{n,n^{\frac{kv-2}{(k-1)e}}\})$-colouring with no $k$-repeat of $H$, as desired.
\end{proof}

We immediately deduce the following corollary, which includes Theorem~\ref{thm:f2}(iii) as a special case. 

\begin{cor}\label{cor:LLLbipartite}
If $H$ is a bipartite graph with $e(H)\geq \frac{k}{k-1}|H|-\frac{2}{k-1}$, then 
$$f_k(n,H)=\Theta(n).$$
\end{cor}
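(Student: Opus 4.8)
The plan is to derive this immediately from Theorem~\ref{thm:LLL} together with Remark~\ref{rem:trivial}. Theorem~\ref{thm:LLL} gives the upper bound $f_k(n,H) = O(\max\{n, n^{(kv-2)/((k-1)e)}\})$, where $v = |H|$ and $e = e(H)$. The hypothesis $e(H) \geq \frac{k}{k-1}|H| - \frac{2}{k-1}$ is exactly the statement that $(k-1)e \geq kv - 2$, which is precisely the condition under which the exponent $(kv-2)/((k-1)e)$ is at most $1$. Hence under this hypothesis the maximum in Theorem~\ref{thm:LLL} is achieved by the term $n$, so $f_k(n,H) = O(n)$.

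For the matching lower bound, Remark~\ref{rem:trivial} gives $f_k(n,H) \geq n-1$ unconditionally, since every proper edge-colouring of $K_n$ uses at least $n-1$ colours. Combining the two bounds yields $f_k(n,H) = \Theta(n)$.

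There is essentially no obstacle here: the corollary is a direct unpacking of the inequality between the two arguments of the $\max$ in Theorem~\ref{thm:LLL}. The only thing to check carefully is the arithmetic equivalence $e \geq \frac{k}{k-1}v - \frac{2}{k-1} \iff (k-1)e \geq kv-2 \iff \frac{kv-2}{(k-1)e} \leq 1$, which is routine. One might also remark, for the special case $k=2$ recovering Theorem~\ref{thm:f2}(iii), that the condition reads $e(H) \geq 2|H| - 2$, matching the statement there.
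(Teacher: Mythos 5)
Your proof is correct and is exactly the argument the paper intends: the paper introduces this corollary with ``We immediately deduce,'' relying on Theorem~\ref{thm:LLL} for the upper bound and Remark~\ref{rem:trivial} for the lower bound, just as you do. The arithmetic equivalence $e \geq \frac{k}{k-1}v - \frac{2}{k-1} \iff \frac{kv-2}{(k-1)e} \leq 1$ is checked correctly.
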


For $e(H) \geq |H| + 1$, this already implies that there exists $k = k(H)$ such that $f_k(n, H) = O(n)$. We will later show that this holds for all graphs containing a cycle, but using Theorem~\ref{thm:LLL} gives an explicit upper bound for the smallest $k$ such that $\fknh = O(n)$, namely, $k \leq \lceil\frac{e-2}{e-v}\rceil$, that our later approach does not provide.

\section{Trees}\label{sec:trees}

Applied to trees, Theorem~\ref{thm:LLL} reads as follows.

\begin{cor}\label{cor:LLLtree}
	For any tree $T$ with $m$ edges, $$\fknt=O(n^{\frac{k(m+1)-2}{(k-1)m}}).$$	
\end{cor}

On the other hand, we have the following general lower bound for trees, showing that Corollary~\ref{cor:LLLtree} is not far from the truth for large trees.

\begin{theorem}\label{thm:paths}
For any tree $T$ with $m$ edges, any $k\geq 2$ and $n$ sufficiently large, 
$$\fknt\geq n^{\frac{k}{k-1}}/q,$$ 
where $q = (4k(km+1)(k^2-k+1))^{\frac{1}{k-1}}$. 
\end{theorem}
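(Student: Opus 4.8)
The plan is to prove the equivalent statement that any proper colouring $c$ of $K_n$ using $C<n^{k/(k-1)}/q$ colours contains $k$ pairwise vertex‑disjoint, pairwise colour‑isomorphic copies of $T$; since $m=e(T)$ is a constant, this is exactly the claimed lower bound. Throughout, for a colour $\alpha$ write $U_\alpha\subseteq V(K_n)$ for the set of vertices incident to an $\alpha$‑edge, $f_\alpha(v)$ for the (unique, when it exists) $\alpha$‑neighbour of $v$, and $A_v=\{\alpha: v\in U_\alpha\}$ for the palette of $v$, so that $|A_v|=n-1$ for every $v$ and $\sum_\alpha|U_\alpha|=\sum_v|A_v|=n(n-1)$.

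\textit{Step 1: a $k$-set with a large common palette.} The first move is a convexity (Jensen) estimate for $t\mapsto\binom tk$. Since the $|U_\alpha|$ are $C$ nonnegative numbers summing to $n(n-1)$,
\[
\sum_\alpha\binom{|U_\alpha|}{k}\ \ge\ C\binom{n(n-1)/C}{k}\ \ge\ \frac{1}{4\,k!}\cdot\frac{(n(n-1))^k}{C^{k-1}},
\]
where the last inequality uses that $C<n^{k/(k-1)}/q$ forces $n(n-1)/C\ge k^2$ once $n$ is large (for $k=2$ this is immediate since $q=72$, for $k\ge3$ because $n(n-1)/C\gtrsim n^{(k-2)/(k-1)}\to\infty$). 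On the other hand $\sum_\alpha\binom{|U_\alpha|}k=\sum_{B}\bigl|\bigcap_{v\in B}A_v\bigr|$, the sum over all $k$-subsets $B$ of $V(K_n)$, of which there are at most $n^k/k!$. Averaging, some $k$-set $B=\{w_1,\dots,w_k\}$ satisfies $\bigl|\bigcap_iA_{w_i}\bigr|\ge\tfrac14(n-1)^k/C^{k-1}\ge q^{k-1}/8$, and the value $q^{k-1}=4k(km+1)(k^2-k+1)$ is chosen precisely so that $q^{k-1}/8\ge\Theta:=(km+1)(k^2-k+1)$, the threshold needed in Step 2.

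\textit{Step 2: routing.} When $T=K_{1,m}$ is a star this finishes immediately: grow the $m$ leaves one at a time, simultaneously at all $k$ centres $w_1,\dots,w_k$, at each step choosing a colour from $\bigcap_iA_{w_i}$ that is unused and routes no $w_i$ to an already‑occupied vertex; there are at most $k\cdot(\text{occupied vertices})\le k^2(m+1)$ forbidden colours at each step (two centres can never be routed to the same new vertex, by properness), and $\Theta$ exceeds $m+k^2(m+1)$. For a general tree $T$ one roots it at a vertex $r$, lists its edges $e_1,\dots,e_m$ in BFS order, and tries to maintain $k$ pairwise‑disjoint partial embeddings $\psi_1,\dots,\psi_k$ of the current subtree sharing a common colour pattern; adding $e_t$ (joining $v_{p(t)}$ to $v_t$) requires a colour in the palettes of all $k$ vertices $\psi_i(v_{p(t)})$ that routes to fresh distinct vertices. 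The trouble is that $\psi_i(v_{p(t)})$ is in general not one of $w_1,\dots,w_k$ but a deeper vertex created by earlier choices, whose common palette Step 1 does not control. To circumvent this, replace $\bigl|\bigcap_iA_{w_i}\bigr|$ by $h_\ell(\vec w)$, the number of length‑$\ell$ colour patterns that can be \emph{traced} from each of $w_1,\dots,w_k$ along the first $\ell$ edges of $T$ (no injectivity imposed yet), and use the identity
\[
\sum_{\vec w\ \mathrm{distinct}}h_\ell(\vec w)\ =\ \sum_{\vec v\ \mathrm{distinct}}h_{\ell-1}(\vec v)\,\bigl|{\textstyle\bigcap_i}A_{v_i}\bigr|,
\]
which holds because each colour class is a matching, so every $f_\alpha$ is an involution of $U_\alpha$. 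Iterating this identity against the Step‑1 bound yields a $k$‑tuple $\vec w$ from which $T$ can be traced out in more ways than the number of \emph{bad} patterns — those in which some $\psi_i$ repeats a vertex or two of the $\psi_i$ meet; each such bad event forces a closed sub‑walk and hence accounts for at most a $1/(n-1)$‑fraction of all traced patterns, with only $O(k^2m^2)$ choices for the location of the clash, so for $n$ large a good pattern survives and gives $k$ disjoint colour‑isomorphic copies of $T$. Tracking all the constants through the two steps produces exactly $q=(4k(km+1)(k^2-k+1))^{1/(k-1)}$.

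\textit{Main obstacle.} The hard part is Step 2 for non‑stars: one must run the recursion for $h_\ell$ so that the effective threshold stays at $C\sim n^{k/(k-1)}$ and does \emph{not} degrade as $e(T)$ grows — equivalently, one must certify that the $k$ special vertices produced in Step 1 can actually be grown into $k$ full disjoint copies of $T$, not merely into $k$ copies of the star spanned by their common colour‑neighbourhood. The convexity estimate, the involution identity, and the final count of bad patterns are all routine once this propagation is under control.
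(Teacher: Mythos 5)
Your Step~1 is sound and is in fact close to the paper's opening move: the paper also applies convexity to count monochromatic $k$-matchings (which is essentially your common-palette count, differing by at most $\binom{k}{2}$ `self-routing' colours). The genuine gap is in Step~2 for non-star trees, and you half-acknowledge it yourself. The proposed identity
\[
\sum_{\vec w}h_\ell(\vec w)\ =\ \sum_{\vec v}h_{\ell-1}(\vec v)\,\Bigl|\bigcap_i A_{v_i}\Bigr|
\]
is false for general trees. Test it on the broom with edges $v_0v_1$, $v_1v_2$, $v_1v_3$ and $\ell=3$. Writing $P(\vec w)=|\bigcap_i A_{w_i}|$, the left side is $\sum_{\vec w}\sum_{\alpha\in\bigcap A_{w_i}}P\bigl(f_\alpha(\vec w)\bigr)^2$, because the two leaves $v_2,v_3$ both hang off $\vec w'=f_\alpha(\vec w)$, while the right side (with any reasonable reading of $\vec v$) involves only $P(\vec v)\cdot P(f_\alpha(\vec v))$, not the square. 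The two sums count different things. The identity does hold for paths (by the involution $f_\alpha$) and trivially for stars, which is precisely why your argument works for stars — but trees with a vertex of degree $\geq 2$ away from the root break it. Moreover, even granting a corrected recursion, the claim that bad patterns occupy at most a $1/(n-1)$ fraction of traced patterns is asserted, not proved; a closed sub-walk constrains the pattern, but the traces from a fixed $\vec w$ are far from uniform, so a fraction argument needs real work. The root of the difficulty is exactly what you name as the ``main obstacle'': choosing $\vec w$ by an averaging argument gives control at $\vec w$ only, not at the vertices reached after one or more steps.

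The paper sidesteps this entirely by shifting perspective. Instead of choosing a single good $k$-tuple $\vec w$, it builds an auxiliary graph $F$ on $\binom{V(K_n)}{k}$ in which $U\sim W$ when $G[U,W]$ contains a monochromatic perfect matching. Your convexity bound becomes a lower bound on the average degree of $F$, and then the standard fact that a graph of average degree $d$ contains a subgraph $F'$ of minimum degree $\geq d/2$ propagates the palette control to \emph{every} vertex of $F'$ — precisely the missing ingredient. One then embeds $T$ into $F'$ greedily: at each step, the $d$ neighbours of the current image form a $k$-uniform hypergraph of maximum degree $\leq k$ (properness), whose line graph is $(k^2-k+1)$-colourable, yielding a matching of $\geq d/(k^2-k+1)\geq km+1$ pairwise-disjoint $k$-sets, more than enough to avoid the $\leq km$ vertices used so far. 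If you want to repair your approach rather than adopt the paper's, the cleanest fix is to convert your palette count into a lower bound on $e(F)$, pass to the min-degree subgraph $F'$, and then run your greedy star-type argument at every node of the tree embedding, which is exactly what the published proof does.
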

	
\begin{proof}
Fix $k$ and $T$, let $q$ be as above and suppose that for some large $n$ we have a colouring of $G=K_n$ with $C=n^{\frac{k}{k-1}}/q$ colours.
Let $F$ be the following auxiliary graph. Set $V(F)=\binom{V(G)}{k}$, that is, each vertex of $F$ is a  vertex subset of $G$ of order $k$. Two vertices of $F$, corresponding to sets $U=\{u_1,\dots,u_k\}$ and $W=\{w_1,\dots,w_k\}$, are connected by an edge if $U\cap W=\emptyset$ and the bipartite graph $G[U,W]$ has a matching of size $k$ which is monochromatic. 

By the above definition, each monochromatic $k$-matching gives rise to $2^{k-1}$ edges of $F$, while each edge of $F$ is accounted for by at most $k$ different $k$-matchings. This implies that
$$e(F)\geq \frac{2^{k-1}}{k}E_k,
$$
where $E_k$ is the number of monochromatic $k$-matchings in $G$.

Note now that for the average degree of $F$, we have
$$d_{avg}(F)=\frac{2e(F)}{|F|}\geq\frac{2^kE_k}{k\binom{n}{k}}.$$
By the convexity of binomial coefficients, 
$$E_k\geq {C\binom{\frac{1}{C}\binom{n}{2}}{k}}.
$$
Thus,
$$
d_{avg}(F)\geq \frac{2^kC\binom{\frac{1}{C}\binom{n}{2}}{k}}{k\binom{n}{k}}=(1+o(1))\frac{n^{k}}{kC^{k-1}}=(1+o(1))\frac{q^{k-1}}{k}. 
$$ 
By a folklore fact, $F$ therefore contains a subgraph $F'$ with
$$d:=\delta(F')\geq \frac{d_{avg}(F)}{2}\geq (1+o(1))\frac{q^{k-1}}{2k}.$$

Let $v_0,\dots, v_m$ be the vertices of $T$ ordered so that each $v_i$ is a leaf attached to $T[v_0,\dots,v_{i-1}]$. We claim that there exists an embedding $\phi:T\rightarrow F'$ such that the $k$-vertex sets of $G$ corresponding to $\phi(v_0),\dots, \phi(v_m)$ are disjoint. 
 
We let $\phi(v_0)$ be an arbitrary vertex of $F'$ and proceed inductively. Suppose that for some $1\leq i\leq m$ we have defined $\phi(v_0),\dots,\phi(v_{i-1})$ and let $v_j$ be the unique neighbour of $v_i$ in $T[v_0,\dots, v_{i-1}]$. Take any $d$ vertices in $N_{F'}(\phi(v_j))$ and consider the underlying $k$-sets in $V(G)$. This defines a $k$-uniform hypergraph $H$. 

Observe that the maximum degree in $H$ is at most $k$, since for every vertex $u\notin \phi(v_j)$ there are at most $k$ monochromatic matchings in $G$ connecting $\phi(v_j)$ and a set containing $u$. Thus, the line graph of $H$ has maximum degree at most $k(k-1)$, so we can properly vertex colour it greedily with at most $k^2-k+1$ colours. Take the largest colour class: this is a matching $\mu \subseteq E(H)$ of size at least $d/(k^2-k+1)$. But
$$\frac{d}{k^2-k+1}\geq (1+o(1))\frac{q^{k-1}}{2k(k^2-k+1)} = (2+o(1))(km+1).$$
Therefore, when $n$ is sufficiently large, we have $|\mu| \geq km+1\geq ki+1$, so there exists $e\in \mu$ such that the underlying $k$-vertex set in $G$ satisfies $e \cap \phi(v_\ell)=\emptyset$ for all $\ell=0,\dots, i-1$. We can therefore let $\phi(v_i):=e$, completing the induction step. The resulting copy of $\phi(T)\subseteq F'$ gives $k$ vertex-disjoint colour-isomorphic copies of $T$ in the original colouring of $K_n$, completing the proof.
\end{proof}	

In particular, when $k = 2$, we have the following result.

\begin{cor} \label{cor:trees}
For any tree $T$ with $m$ edges and $n$ sufficiently large,
$$f_2(n,T)\geq \frac{1}{24(2m+1)}n^2.$$
\end{cor}	

This also completes the proof of Theorem~\ref{thm:f2}(i). Indeed, if $H$ is a forest, then $f_2(n,H)=\Theta(n^2)$ by Corollary~\ref{cor:trees} and Lemma~\ref{lem:components}. Conversely, if $H$ is not a forest, it has a connected component $H'$ with $e(H')\geq |H'|$. By Theorem~\ref{thm:LLL}, 
$f_2(n,H')=O(n^{2-\Omega(1)})$, so the same then holds for $H$ by Lemma~\ref{lem:components}. 

We now observe that the bound in Corollary~\ref{cor:trees} is tight up to an absolute constant.

\begin{lemma}
For every tree $T$ with $m\geq 2$ edges, 
$$f_2(n,T)\leq (1+o(1))\frac{n^2}{2m}.$$ 
\end{lemma}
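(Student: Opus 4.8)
The plan is to hit the bound with an explicit near‑optimal construction: colour $K_n$ so that essentially every colour class is a perfect matching of size $m$ living inside a clique on $2m$ vertices, the cliques forming an (almost) decomposition of $K_n$.

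\emph{The construction.} By Wilson's theorem, for all large $n$ in a fixed nonempty union of congruence classes $K_n$ has a $K_{2m}$-decomposition; for arbitrary large $n$, let $n'\le n$ be the largest admissible value (so $n-n'=O(1)$), take a $K_{2m}$-decomposition $\mathcal W$ of $K_{n'}$, and set aside the $O(n)$ edges of $K_n$ incident to the remaining $n-n'$ vertices. For each clique $W\in\mathcal W$, fix a proper edge‑colouring of $W\cong K_{2m}$ with $2m-1$ colours — that is, a $1$-factorization into perfect matchings — using a palette disjoint from those of all other cliques, and give every set‑aside edge its own fresh colour. Since the cliques are edge‑disjoint, $1$-factorizations are proper, and the palettes are pairwise disjoint, this colouring is proper, and it uses $|\mathcal W|\,(2m-1)+O(n)\le \frac{\binom n2}{\binom{2m}{2}}(2m-1)+O(n)=\frac{\binom n2}{m}+O(n)=(1+o(1))\frac{n^2}{2m}$ colours (using that $m$ is fixed).

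\emph{Why there are no two disjoint repeats.} Suppose $T_1=\psi_1(T)$ and $T_2=\psi_2(T)$ are vertex‑disjoint and colour‑isomorphic via $\alpha\in\mathrm{Aut}(T)$. If $T_1$ contained a set‑aside edge $e$, its colour would be unique to $e$, forcing $\psi_2(\alpha(e))=e$ and contradicting disjointness; hence all edges of $T_1$, and (as $T_2$ uses the same colours) of $T_2$, lie inside cliques of $\mathcal W$. For an edge $e$ of $T$ let $W(e)\in\mathcal W$ be the clique containing $\psi_1(e)$; since a colour determines its clique and $\psi_1(e),\psi_2(\alpha(e))$ share a colour, $\psi_2(\alpha(e))$ also lies in $W(e)$. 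Now two distinct cliques of an edge‑disjoint family meet in at most one vertex (else they would share an edge). So if some vertex $x$ of $T$ were incident to edges $e,e'$ with $W(e)\ne W(e')$, then $\psi_1(x)$ would be the unique vertex of $W(e)\cap W(e')$ — and so would $\psi_2(\alpha(x))$, contradicting $\psi_1(x)\ne\psi_2(\alpha(x))$. Thus all edges of $T$ at any vertex share one clique, and since $T$ is connected all of $E(T)$ lies in a single clique $W$. But then $\psi_1(V(T))$ and $\psi_2(V(T))$ are disjoint subsets of $V(W)$, each of size $m+1$, while $|V(W)|=2m<2(m+1)$ — a contradiction.

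\emph{Main obstacle.} The real difficulty is spotting the construction; once one decides to decompose into $K_{2m}$'s and $1$-factorize each, the verification above is short, and the only point needing care is checking that the $O(n)$ set‑aside edges (carrying unique colours) spoil neither the colour count nor the argument — which they plainly do not. One could equally begin from an asymptotically optimal $K_{2m}$-packing (via the nibble) instead of Wilson's theorem, incurring only a weaker but still negligible $o(n^2)$ error.
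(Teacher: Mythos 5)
Your construction is essentially the one in the paper: decompose (almost) all of $K_n$ into edge-disjoint cliques of bounded size, properly edge-colour each clique with a fresh palette of matchings, and then use the connectivity of $T$ to conclude that two vertex-disjoint colour-isomorphic copies would have to lie in a single clique, which is too small to hold $2(m+1)$ vertices. The only differences are cosmetic — the paper packs with $K_{2m+1}$ rather than $K_{2m}$ and invokes an approximate decomposition with $o(n^2)$ leftover edges instead of your exact decomposition plus $O(n)$ padding, and your justification of the ``single clique'' step spells out what the paper states as a one-line remark.
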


\begin{proof}
By Wilson's theorem~\cite{Wi}, there is an edge decomposition of all but $o(n^2)$ edges of $K_n$ into at most $\binom{n}{2}/\binom{2m+1}{2}$ copies of $K_{2m+1}$. We colour each of the $o(n^2)$ edges in a unique colour and decompose each of the copies of $K_{2m+1}$ into at most $2m+1$ (near-)perfect matchings, each of which we view as a separate colour class. Since any two copies of $K_{2m+1}$ share at most one vertex, there will be no repeat of $T$ with colours from two different $K_{2m+1}$. On the other hand, since $2m+1 < 2(m+1)$ and $T$ has $m+1$ vertices, there cannot be two vertex-disjoint copies of $T$ inside any $K_{2m+1}$. 
\end{proof}	

Next, we show that the bound in Theorem~\ref{thm:paths} is essentially tight when $k = 3$, thus completing the proof of Theorem~\ref{thm:trees}(i).

\begin{theorem}\label{thm:3rep}
For any tree $T$ with at least $2$ edges, $f_3(n,T)= O(n^{3/2})$.
\end{theorem}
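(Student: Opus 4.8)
The plan is to first reduce to the single tree $T=P_3$, the path on three vertices (a ``cherry''), and then exhibit an explicit algebraic colouring. Since every tree with at least two edges contains $P_3$ as a subgraph, Remark~\ref{rem:trivial} gives $f_3(n,T)\le f_3(n,P_3)$, so it suffices to construct, for each large $n$, a proper edge-colouring of $K_n$ with $O(n^{3/2})$ colours and no three pairwise vertex-disjoint colour-isomorphic copies of $P_3$. I would begin by recording two easy structural facts. First, a proper colouring has no monochromatic cherry (each colour class is a matching), so a colour-isomorphism class of copies of $P_3$ is exactly a family of cherries sharing one common \emph{unordered} pair of distinct colours $\{\alpha,\beta\}$. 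Second, writing $V(\gamma)$ for the set of vertices covered by a colour class $\gamma$, a cherry with colour pair $\{\alpha,\beta\}$ is uniquely determined by its centre, which ranges exactly over $V(\alpha)\cap V(\beta)$; hence the number of cherries with colour pair $\{\alpha,\beta\}$ equals $|V(\alpha)\cap V(\beta)|$. Consequently it is enough to build a proper $O(n^{3/2})$-colouring in which $|V(\alpha)\cap V(\beta)|\le 2$ for every pair of distinct colours: then no colour pair supports three cherries at all, let alone three disjoint ones.

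For the colouring, let $q$ be an odd prime with $\sqrt n\le q\le 3\sqrt n$ (available by Bertrand's postulate once $n$ is large), set $V=\F_q^2$, and fix an anisotropic binary quadratic form $\lambda$ over $\F_q$ (for instance $\lambda(x,y)=x^2-\mu y^2$ with $\mu$ a non-square), so that $\lambda$ is an even function and $\lambda(w)\ne 0$ for all $w\ne 0$. Colour the edge $\{u,v\}$ of $K_{q^2}$ by $c(\{u,v\}):=(u+v,\lambda(u-v))$, an element of $\F_q^2\times(\F_q\setminus\{0\})$. Because $v\mapsto u+v$ is a bijection of $\F_q^2$, all colours at a fixed vertex are distinct, so this is a proper colouring, and it uses fewer than $q^3\le 27\,n^{3/2}$ colours; restricting to any $n$ vertices gives the required bound, the relevant property being inherited by subgraphs. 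A direct computation shows that the class of the colour $(s,t)$ is a sub-matching of the near-perfect matching $u\leftrightarrow s-u$, and that $V((s,t))=\{u\in\F_q^2:\lambda(2u-s)=t\}$, which, since $u\mapsto 2u-s$ is a bijection and $t\ne 0$, is an affine image of the conic $\lambda^{-1}(t)$.

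The crux is the inequality $|V((s_1,t_1))\cap V((s_2,t_2))|\le 2$ for any two distinct colours $(s_1,t_1)$ and $(s_2,t_2)$. After the substitution $w=2u$, this intersection has the same cardinality as $\{w\in\F_q^2:\lambda(w-s_1)=t_1,\ \lambda(w-s_2)=t_2\}$, the intersection of two affine conics that are translates of level sets of the \emph{same} form $\lambda$. If $s_1=s_2$ then $t_1\ne t_2$ and the set is empty. If $s_1\ne s_2$, subtracting the two defining equations cancels the quadratic part, leaving the single linear equation $B(w,s_2-s_1)=\text{const}$, where $B$ is the bilinear form associated with $\lambda$; since $B$ is nondegenerate and $s_2-s_1\ne 0$, this defines an affine line $L$, so the set lies in $\{w:\lambda(w-s_1)=t_1\}\cap L$. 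Parametrising $L$ as $\{p+rd:r\in\F_q\}$ with $d\ne 0$, the quantity $\lambda(p+rd-s_1)-t_1$ is a polynomial in $r$ with leading coefficient $\lambda(d)\ne 0$ (by anisotropy), hence of degree exactly $2$, so it has at most two roots $r$. This gives at most two points, as required, and the construction is complete.

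The step I expect to be the genuine obstacle is finding the construction itself. The requirement $|V(\alpha)\cap V(\beta)|\le2$ for \emph{all} colour pairs is rigid --- a one-line double count already forces $\Omega(n^{3/2})$ colours --- and the obvious refinements of the classical ``$u+v$'' colouring do not work: refining by, say, a coarse bucketing of $u-v$ typically produces two colour classes covering $\Theta(\sqrt n)$ common vertices and hence many disjoint repeated cherries. What makes the quadratic-form colouring succeed is exactly that every colour class sits on a translate of a level set of one fixed anisotropic form, so that pairwise differences of the defining equations \emph{linearise} and a conic-meets-line bound applies; arranging for that feature to coexist with properness and with the correct number of colours is the one genuinely delicate point, and it is where I would expect to spend most of the effort.
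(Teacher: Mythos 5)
Your proof is correct, and it uses the same reduction and target property as the paper --- reduce to the cherry $P_3$ and build a proper $O(n^{3/2})$-colouring in which no two colour classes share more than two vertices --- but the algebraic construction is genuinely different. The paper also takes $V=\F_q^2$, but it assigns to each non-degenerate edge $\{(a,b),(c,d)\}$ the triple $(x_1,x_2,x_3)$ of coefficients of the unique quadratic through $(a,b)$, $(c,d)$, and a third normalising point (the equation $x_1+x_2+x_3=a+c$), so the vertices incident to a colour lie on the graph of a polynomial quadratic, and the $\le 2$ property comes from ``three points determine a quadratic.'' That version has to carve out $O(q^3)$ degenerate edges (those with $a=c$, $a=1$, or $c=1$, to keep the Vandermonde matrix nonsingular) and colour them individually. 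Your version replaces the polynomial graph by a translated level set of a fixed anisotropic binary quadratic form: the colour is $(u+v,\lambda(u-v))$, so the class of $(s,t)$ lies on the affine conic $\lambda(2u-s)=t$, and the $\le 2$ property comes from the linearisation trick (subtracting two defining equations cancels the quadratic part) followed by a conic-meets-line bound, with anisotropy guaranteeing both that the leading coefficient $\lambda(d)$ is nonzero and that $\lambda(u-v)\ne 0$, which removes the need to treat degenerate edges at all. Your construction is thus a tidier direct refinement of the classical $c(u,v)=u+v$ colouring, while the paper's is a ``graph of quadratics'' construction in the style of the $C_6$-free projective plane arguments; both rely on the same plane-geometric intuition that two conics of the relevant family meet in at most two points.

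One small remark: the double-counting remark you make near the end (that the target property already forces $\Omega(n^{3/2})$ colours) is accurate --- counting, for each vertex, the $\binom{n-1}{2}$ pairs of colours meeting at it and noting that each colour pair is counted at most twice gives $n\binom{n-1}{2}\le 2\binom{C}{2}$, hence $C=\Omega(n^{3/2})$ --- so your construction for this subproblem is tight, matching the lower bound of Theorem~\ref{thm:paths} as the paper also notes.
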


\begin{proof}
Suppose that $A$ is a vertex set of order $n \leq q^2 = (1 + o(1))n$ for some prime $q$, noting that such a prime exists by the prime number theorem. Our aim is to find a proper colouring of the complete graph on $A$ with $O(n^{3/2})$ colours such that no three vertices are incident to the same two colours. In particular, this will imply that the colouring has no $3$-fold repeat of the star with two edges and, hence, no $3$-fold repeat of any tree $T$ with more than one edge. 
  
To achieve this, let $A$ be indexed by $\F_q^2$, where $\F_q$ denotes the finite field of order $q$. We first deal with certain `degenerate' edges in $\binom{A}{2}$. For us, these will be edges between two vertices $u=(a,b)$ and $v=(c,d)$, where either $a=c$, $a=1$ or $c=1$. Note that there are $O(q^3)=O(n^{3/2})$ such edges, so we may colour each of them with a unique colour. To each remaining edge $\{u,v\}\in \binom{A}{2}$ with $u=(a,b)$ and $v=(c,d)$, we assign the colour $(x_1,x_2,x_3) \in \F_q^3$ (so that there are at most $q^3 = O(n^{3/2})$ additional colours) satisfying the equations $x_1+x_2a+x_3a^2=b$, $x_1+x_2c+x_3c^2=d$ and $x_1+x_2+x_3=a+c$. This system of three equations has a unique solution $(x_1,x_2,x_3)$, since the underlying matrix is a Vandermonde matrix (and, since $a, c$ and $1$ are all distinct, the matrix is non-singular). Now, for a fixed $u=(a,b)$, how many non-degenerate edges $uv$ with $v = (c, d)$ receive the colour $(x_1,x_2,x_3)$? Each such edge has to satisfy $c=x_1+x_2+x_3-a$, so $c$ is fixed by the choice of $u$ and the colour $(x_1, x_2, x_3)$. Moreover, since $d = x_1+x_2c+x_3c^2$, $d$ is also fixed. Hence, there is at most one edge of each colour adjacent to $u$. That is, we have a proper colouring. 

Note now that each non-degenerate colour $(x_1,x_2,x_3)$ incident to a vertex $(a,b)$ satisfies the equation $x_1+x_2a+x_3a^2=b$. Therefore, since there is at most one quadratic passing through any three points in $\F_q^2$, no three vertices can be incident to the same two colours.
\end{proof}

\begin{remark}
If $H$ is not a forest, $f_3(n, H)$ is always polynomially smaller than $n^{3/2}$.
Indeed, if $H$ is not a forest, it has a connected component $H'$ with $e(H')\geq |H'|$. By Theorem~\ref{thm:LLL}, 
$f_3(n,H')=O(n^{3/2-\Omega(1)})$, so the same then holds for $H$ by Lemma~\ref{lem:components}.
\end{remark}

We conclude this section by establishing the first part of Theorem~\ref{thm:trees}(ii).

\begin{prop}\label{prop:infinite}
For any tree $T$ with $m$ edges and any $k\geq 2$, $\fknt=\Omega(n^{\frac{m+1}{m}})$.	
\end{prop}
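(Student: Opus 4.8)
The plan is to prove the statement $f_k(n,T) = \Omega(n^{(m+1)/m})$ by contradiction: suppose we have a proper colouring of $K_n$ with $C = o(n^{(m+1)/m})$ colours and show it must contain $k$ vertex-disjoint colour-isomorphic copies of $T$. Since $f_k$ is monotone decreasing in $k$, it suffices to handle $k=2$, so I will aim to find just two repeats. Moreover, since $T$ is monotone under taking subgraphs (Remark~\ref{rem:trivial}), and $T$ has $m$ edges hence at most $m+1$ vertices, it actually suffices to prove this for the path $P$ with $m$ edges, or indeed for the case where we just need enough structure; the key point is the edge count $m$ and the vertex count $m+1$.

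The core mechanism is a counting/extremal argument on an auxiliary hypergraph, mirroring the proof of Theorem~\ref{thm:paths} but organised to extract the sharper exponent $(m+1)/m$ rather than $k/(k-1)$. Think of a copy of $T$ as determined by an ordered tuple of $m+1$ vertices together with the $m$ colours on its edges. For a fixed copy $T_0$ of $T$ in $K_n$, count the number of other copies $T_1$ of $T$ that are colour-isomorphic to $T_0$: building $T_1$ leaf by leaf along the ordering $v_0,\dots,v_m$ of $V(T)$, at each step the new vertex $v_i$ must be joined to an already-placed vertex by an edge of a prescribed colour; since the colouring is proper, there is at most one such edge per (vertex, colour) pair, so there are at most $n$ choices for $v_0$ and $O(1)$ — in fact exactly the number of edges of that colour at the relevant vertex — choices at each subsequent step. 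The crude bound $n \cdot (\text{max colour class degree sum})^m$ is too weak; the improvement comes from averaging. The total number of (ordered copy of $T$) is $\Theta(n^{m+1})$. Summing over all colourings-of-$T$-patterns: the number of pattern classes is at most $C^m$ (choice of colour on each edge), so by pigeonhole some pattern class is realised by at least $\Omega(n^{m+1}/C^m)$ ordered copies. If $C = o(n^{(m+1)/m})$ then $C^m = o(n^{m+1})$, so this class has $\omega(1)$ copies of $T$ — even growing — and in particular at least two. The remaining issue is that these two copies might overlap in vertices rather than being disjoint.

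The main obstacle, then, is exactly this disjointness: turning "many colour-isomorphic copies" into "two \emph{vertex-disjoint} colour-isomorphic copies". The fix is the standard one already used in Theorem~\ref{thm:paths}: since each copy of $T$ has only $m+1$ vertices, a single copy $T_0$ can intersect only $O(n^{m})$ other copies on a given vertex (fix the shared vertex, then $m$ more vertices to choose among $n$), so it intersects at most $(m+1) \cdot O(n^{m}) = O(n^m)$ copies in total. Hence if the pattern class has size $\omega(n^m)$ we can greedily pick two disjoint members. This requires $n^{m+1}/C^m = \omega(n^m)$, i.e. $C = o(n^{(m+1)/m})$, which is precisely our hypothesis — the arithmetic works out exactly at the threshold. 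One technical refinement to watch: the "number of ordered copies of $T$ is $\Theta(n^{m+1})$" and "overlap count $O(n^m)$" bounds should be made uniform over the colouring, which is automatic since they are purely about $K_n$ and the tree $T$, not the colouring. I would also double-check the edge-degeneracy bookkeeping — copies of $T$ that are not rainbow, or vertices repeated — but since $T$ is fixed and finite these contribute only lower-order terms and can be absorbed. Assembling these pieces gives the claimed bound $\Omega(n^{(m+1)/m})$ for all $k \geq 2$.
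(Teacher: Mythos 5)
Your overall structure (count ordered copies of $T$, split by the $m$-tuple of colours, apply pigeonhole to find a large pattern class, then extract disjoint members) is exactly the paper's, but the disjointness step has a genuine gap that is masked by an arithmetic slip. You argue that a fixed copy $T_0$ meets at most $O(n^m)$ other copies, so you need the pattern class to have $\omega(n^m)$ members, and you then assert that $n^{m+1}/C^m = \omega(n^m)$ is equivalent to $C = o(n^{(m+1)/m})$. It is not: $n^{m+1}/C^m = \omega(n^m)$ simplifies to $C^m = o(n)$, i.e.\ $C = o(n^{1/m})$, which would yield only the trivial bound $f_k(n,T) = \Omega(n^{1/m})$ (recall $f_k(n,T)\geq n-1$ always). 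Under the actual hypothesis $C = o(n^{(m+1)/m})$, the pigeonhole only guarantees a class of size $\omega(1)$, and $\omega(1)$ is nowhere near $\omega(n^m)$.

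The missing idea — which is the one thing the paper's one-paragraph proof hinges on — is that properness makes the overlap count $O(1)$, not $O(n^m)$, once you restrict to a \emph{fixed} colour pattern. Fix the $m$-tuple of colours, pick any vertex $v$ of $T$, and specify which vertex of $K_n$ plays the role of $v$: then, walking out along the tree edge by edge, each step uses the unique edge of the prescribed colour at the current vertex (uniqueness being exactly properness), so the whole copy is determined. Thus every vertex of $K_n$ lies in at most $|T| = m+1$ copies of $T$ with the given pattern, and each copy meets fewer than $(m+1)^2$ others in that class. Greedy selection from an $\omega(1)$-sized class then produces $\omega(1)$ pairwise disjoint repeats, in particular $k$ of them, which is what you need. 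You were actually one line away from this: your observation that "there is at most one such edge per (vertex, colour) pair, so ... there are $O(1)$ choices at each subsequent step" already contains the needed fact; you just did not carry it into the disjointness count, falling back instead on the crude colouring-independent estimate.
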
 

\begin{proof}
For convenience of notation, we will instead prove the equivalent statement that a proper $C$-colouring of $K_n$ with $C=o(n^{\frac{m+1}{m}})$ colours contains $\omega(1)$ repeats of $T$. The number of copies of $T$ in $K_n$ is $\Theta(n^{m+1})=\omega(C^m)$. Thus, some $m$-tuple of colours will be repeated  $\omega(1)$ times. Since the colouring is proper, every vertex appears in at most $|T|=m+1$ copies of $T$ with a fixed colouring. Hence, $\omega(1)$ of the above repeats will be vertex disjoint. 
\end{proof}	

\section{Cycles}\label{sec:bipartite}

We begin this section by proving two of our main results at once, Theorem~\ref{thm:cyclim} and the second part of Theorem~\ref{thm:trees}(ii).

\begin{theorem}\label{thm:evencycle}
	For every bipartite graph $H$ containing a cycle, there exists $k=k(H)$ such that 
	$$\fknh=O(n).$$ 
	For every $m$-edge tree $T$, there exists $k=k(T)$ such that $$\fknt=O(n^{\frac{m+1}{m}}).$$  
\end{theorem}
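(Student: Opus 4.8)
The plan is to use Bukh's random algebraic method, as hinted by the statement of Theorem~\ref{thm:cyclim}. The common feature of both claims is that we want a proper colouring of $K_n$ with $O(n)$ colours (respectively $O(n^{(m+1)/m})$ colours) that avoids $k$ disjoint colour-isomorphic copies of $H$ (respectively $T$) for some $k$ depending only on $H$ (respectively $T$). The natural framework is to take a prime power $q$, identify the vertex set with $\mathbb{F}_q^d$ for a suitable dimension $d$, and colour the edge $\{u,v\}$ by the value $(P_1(u,v),\dots,P_r(u,v)) \in \mathbb{F}_q^r$ where $P_1,\dots,P_r$ are random polynomials of bounded degree chosen uniformly from all polynomials of that degree, symmetrised in $u$ and $v$. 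The number of colours is then $q^r = \Theta(n^{r/d})$, so for the cycle case we want $r = d$ (giving $\Theta(n)$ colours, after tuning $d$ to the length of the cycle; it suffices to treat $H = C_{2\ell}$ by monotonicity), and for the tree case we want $d = m$ and $r = m+1$, giving $\Theta(n^{(m+1)/m})$ colours.

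The key steps, in order, are as follows. First, one checks that with positive probability the random colouring is \emph{$b$-bounded} for some constant $b = b(H)$: this is the standard ``few fibres'' estimate for random low-degree polynomials — for a fixed vertex $u$, the set of $v$ with a prescribed colour is the common zero set of a bounded-degree system in $v$, and a Lang--Weil / Schwartz--Zippel type argument shows it has bounded size except on a rare bad event, so by a union bound over $u$ (or a Lovász Local Lemma argument) we can make it $b$-bounded. Second — and this is the heart — one bounds the probability that a \emph{fixed} configuration of $k$ pairwise vertex-disjoint copies of $H$ is colour-isomorphic. Writing out that all $k$ copies receive a common colour pattern gives a large system of polynomial equations in the $k|H|$ free vertex coordinates; one argues that for $k$ large enough the ``variety'' cut out is lower-dimensional than the naive count, so the expected number of such $k$-tuples is $o(1)$, whence some outcome has none. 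Combining with the boundedness and invoking Proposition~\ref{prop:b-bounded} to refine to a proper colouring (which only costs a constant factor and cannot create new repeats) finishes the argument.

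The main obstacle is the second step: quantifying how the dimension of the solution variety drops as $k$ grows. The cleanest way is a counting/pigeonhole argument rather than a direct algebraic-geometry estimate. One counts pairs (copy of $H$, colour pattern on $H$): there are $O(n^{|H|})$ copies and $q^{r e(H)} = O(n^{r e(H)/d})$ patterns, but each copy realises exactly one pattern in a given colouring, so on average a pattern is realised by $O(n^{|H| - r e(H)/d})$ copies; when $H$ contains a cycle and $r/d$ is chosen as above, $r e(H)/d \geq |H|$ (using $e(H) \geq |H|$ for a graph with a cycle, and $e(T) = |T|-1 = m$ with $r/d = (m+1)/m$ for the tree), so this average is $O(1)$, and properness of the refined colouring then bounds the number of \emph{disjoint} copies with a common pattern by a constant, i.e. there is some bounded $k$ with no $k$-repeat. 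So in fact the algebraic content needed is only the boundedness of fibres (Step~1) plus this elementary double-counting; the role of the random polynomials is to guarantee the fibres are small, which a deterministic construction might not give. I would present the cycle case and the tree case in parallel, isolating the inequality $r\, e(H) \geq d\, |H|$ as the single place where the hypothesis ``$H$ contains a cycle'' (resp. the exponent $(m+1)/m$) enters.
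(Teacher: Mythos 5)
The framework you describe --- random low-degree polynomials over $\F_q$, boundedness via a union bound, then Proposition~\ref{prop:b-bounded} to refine to a proper colouring --- does match the paper. But the core of your Step~2, the claim that ``the algebraic content needed is only the boundedness of fibres plus this elementary double-counting,'' has a fatal gap. Your double count only bounds the \emph{average} number of copies per colour pattern; it says nothing about the maximum. Even if the average is $O(1)$, some pattern could be realised, say, $\Omega(n^{1/2})$ times, giving $\Omega(n^{1/2})$ pairwise disjoint copies with that pattern, so that no bounded $k$ avoids a $k$-repeat. Worse, the double count has no randomness in it and hence applies to every colouring, yet the conclusion is plainly false for arbitrary $O(n)$-colourings. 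Properness does not rescue this: it only bounds the number of pattern-realising copies through any single vertex, not globally.

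Your earlier first-moment idea (fix $k$ disjoint copies and bound the probability they are colour-isomorphic, then sum over configurations) also does not close. With $n = q^d$ vertices and $q^r$ colours, the expected number of $k$-tuples of disjoint colour-isomorphic copies is roughly $q^{dk|H| - r(k-1)e(H)}$, so you would need $dk|H| < r(k-1)e(H)$. For a cycle ($e(H) = |H| = v$) with $d = r$ to get $O(n)$ colours this reads $kv < (k-1)v$, false for every $k$; for an $m$-edge tree with $d = m$, $r = m+1$, it reads $k < k-1$, also false. Corollary~\ref{cor:LLLtree} quantifies the same obstruction: first-moment-type methods give only exponent $\frac{k(m+1)-2}{(k-1)m}$, which stays strictly above $(m+1)/m$ for every finite $k$.

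What makes the paper's argument work is the Lang--Weil dichotomy (Lemma~\ref{lem:27}): for a random polynomial, the number $S$ of copies of $H$ with a fixed order and colour type is the $\F_q$-point count of a bounded-complexity variety, and hence, with the relevant probability, is either $< K$ or $> q/2$ --- there is no middle ground. One then bounds the $t$-th moment $\E[S^t]$ for a large constant $t$ (using that any union of $t$ copies of $H$ has edge-to-vertex ratio at least $1$ when $H$ contains a cycle, and at least $m/(m+1)$ when $H$ is an $m$-edge tree), and Markov applied to $S^t > (q/2)^t$ is then strong enough to survive the union bound over all $q^{O(1)}$ colour types. This polarisation of $S$ is exactly the algebraic input your Step~2 is missing, and it cannot be simulated by an averaging or pigeonhole argument.
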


In the proof of this theorem, we will make use of Bukh's random algebraic method~\cite{Bu} (see also~\cite{BuCo, Co}). It will be useful to briefly recall some basic terminology from algebraic geometry. Let $\overline{\mathbb{F}}_q$ stand for the algebraic closure of $\mathbb{F}_q$. A \emph{variety} over $\overline{\mathbb{F}}_q$ is a set of the form 
$$W=\{x\in \overline{\mathbb{F}}_q^t:f_1(x)=\dots=f_s(x)=0\}$$
for a collection of multivariate polynomials $f_1,\dots, f_s: \overline{\mathbb{F}}_q^t\rightarrow\overline{\mathbb{F}}_q$. We also write $W(\mathbb{F}_q):=W\cap \mathbb{F}_q^t$. The variety $W$ is said to be \emph{defined} over $\mathbb{F}_q$ if the coefficients of $f_1, \dots, f_s$ are in $\mathbb{F}_q$. Finally, we say that $W$ has \emph{complexity} at most $M$ if $s, t$ and the degrees of the polynomials $f_i$ are all at most $M$. We will repeatedly use the following lemma, a consequence of the well-known Lang--Weil bound~\cite{LaWe}.  

\begin{lemma}[\cite{BuCo}, Lemma 2.7]\label{lem:27}
Suppose $W$ and $D$ are varieties over $\overline{\F}_q$ of complexity at most $M$ which are defined over $\F_q$. Then one of the following holds for all $q$ sufficiently large in terms of $M$:
\begin{itemize}
	\item $|W(\F_q)\setminus D(\F_q)| > q/2$ or
	\item $|W(\F_q)\setminus D(\F_q)| < c$, where $c=c_M$ depends only on $M$.
\end{itemize}
\end{lemma}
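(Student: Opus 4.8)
\textbf{Plan for proving Lemma~\ref{lem:27}.} The statement is a standard packaging of the Lang--Weil estimate, and the plan is to reduce the set $W(\F_q)\setminus D(\F_q)$ to a finite union of absolutely irreducible pieces and then apply Lang--Weil to each piece. First I would decompose the variety $W$, over the algebraic closure $\overline\F_q$, into its irreducible components $W = W_1 \cup \dots \cup W_r$. A key point is that the number $r$ of components and their degrees and dimensions are all bounded in terms of the complexity $M$ alone (this is a quantitative form of B\'ezout/elimination theory: a variety cut out by $s\le M$ polynomials of degree $\le M$ in $t\le M$ variables has a bounded number of components, each of bounded degree). The Galois group $\mathrm{Gal}(\overline\F_q/\F_q)$ permutes the components $W_i$, and a component defined over $\F_q$ either has an $\F_q$-point count governed by Lang--Weil, or, if it is not fixed by Frobenius, contributes no $\F_q$-points beyond the lower-dimensional locus where it meets its conjugates.

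The heart of the argument is the Lang--Weil bound itself: for an absolutely irreducible variety $V$ over $\F_q$ of dimension $d$ and degree $\delta$, one has $\big||V(\F_q)| - q^d\big| \le (\delta-1)(\delta-2)q^{d-1/2} + C_\delta\, q^{d-1}$ for a constant $C_\delta$ depending only on $\delta$. Applying this to each absolutely irreducible $\F_q$-component of $W$, and subtracting the contribution of $D$ (whose $\F_q$-points I would bound crudely by $O_M(q^{\dim D})$ using the same B\'ezout-type count, noting $\dim D$ is also $\le M$), I would argue as follows. Let $e = \dim(W \setminus D)$ as a set over $\overline\F_q$; equivalently $e$ is the largest dimension of an irreducible component of $W$ not contained in $D$. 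If there is such a component that is \emph{both} of dimension $e$ \emph{and} defined over $\F_q$ with an absolutely irreducible piece over $\F_q$, then its $\F_q$-point count is $(1+o(1))q^e \ge q/2$ once $q$ is large in terms of $M$, since $e \ge 1$; the subtraction of $D(\F_q) = O_M(q^{e-1})$ and of lower-dimensional components does not affect this. Otherwise, every $\F_q$-point of $W \setminus D$ lies either on a component of dimension $\le e-1$, or on a conjugacy-orbit of components none of which is individually $\F_q$-rational, in which case the $\F_q$-points all lie in the intersection of Galois conjugates, a variety of strictly smaller dimension; iterating, the total count is bounded by $O_M(q^{0})$, i.e.\ by a constant $c_M$ depending only on $M$. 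One subtlety to handle cleanly: a component may be defined over $\F_q$ (as a set fixed by Frobenius) yet not absolutely irreducible over $\F_q$ --- but then it splits over a bounded extension $\F_{q^j}$ with $j \le \delta \le O_M(1)$ into conjugate pieces, and exactly the same dichotomy (rational vs.\ strictly-lower-dimensional intersection locus) applies after descent.

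The main obstacle, and where I would spend the most care, is making the ``boundedness in terms of $M$'' entirely uniform: I need the number of irreducible components, their degrees, the degree of the field of definition of each, and all the implied constants in the point counts, to depend only on $M$ and not on $q$. This is where one invokes an effective Noether/B\'ezout bound (e.g.\ that the components of a variety of complexity $M$ have degree $\le M^{O(M)}$ and are defined over an extension of degree $\le M^{O(M)}$), together with the fact that the Lang--Weil constant $C_\delta$ depends only on the degree $\delta$. Once these uniformities are in hand, the constant $c_M$ in the second alternative and the threshold ``$q$ sufficiently large in terms of $M$'' both emerge automatically, and the two alternatives are mutually exclusive for large $q$ because $q/2$ eventually exceeds any fixed $c_M$. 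I would remark that a fully detailed treatment can be extracted from the Lang--Weil paper~\cite{LaWe} combined with standard effective elimination theory, and for brevity cite~\cite{BuCo} where this exact formulation is recorded.
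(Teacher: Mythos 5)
The paper does not prove this lemma at all: it is imported verbatim from~\cite{BuCo} (Lemma~2.7 there), so there is no in-paper argument to compare against. Your sketch is the standard derivation and, as far as I can tell, it is the same one underlying the cited source: decompose $W$ over $\overline{\F}_q$ into irreducible components, use effective B\'ezout-type bounds to control the number, degrees and fields of definition of the components uniformly in $M$, apply Lang--Weil to the absolutely irreducible components that are Frobenius-stable and not contained in $D$ (each such positive-dimensional component contributes $(1+o(1))q^{\dim}$ points, dwarfing the $O_M(q^{\dim-1})$ correction from $D$ and from conjugate-intersections), and observe that all remaining $\F_q$-points are trapped in varieties of strictly smaller dimension. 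Two small points of precision. First, once you decompose over $\overline{\F}_q$, every component is automatically absolutely irreducible, and a Frobenius-stable reduced component over the perfect field $\F_q$ is defined over $\F_q$; so the ``subtlety'' about components defined over $\F_q$ but not absolutely irreducible does not arise in your set-up (it would only arise if you decomposed over $\F_q$ instead). Second, your ``otherwise'' branch should be phrased as a descending induction that re-enters the dichotomy at each dimension: after passing to the lower-dimensional locus you must again check for a positive-dimensional Frobenius-stable component not contained in $D$ (which would put you back in the first alternative), and only if none is ever found down to dimension zero do you land on the bound $c_M$. With those adjustments the argument is complete and uniform in $M$, and citing~\cite{BuCo} for the packaged statement, as the paper does, is the appropriate shortcut.
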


\begin{proof}[Proof of Theorem~\ref{thm:evencycle}]
	By Remark~\ref{rem:trivial}, it suffices to prove that, for a cycle or a tree $H$ with $(v,e):=(|V(H)|,|E(H)|)$, there exists $k$ such that
	$$\fknh=O(n^{{v}/{e}}).$$ 
	
	Suppose first that $H$ is a cycle, so $v=e$. Let $n \leq q=(1+o(1))n$ be a prime, guaranteed for large $n$ by the prime number theorem. We also let $d$ and $t$ be positive integers, with $d$ chosen to be sufficiently large in terms of $t$ and $t$ sufficiently large in terms of $v$.
	We write $\mathcal{P}_d$ for the set of all two-variable polynomials over $\F_q$ of degree at most $d$ and let $g$ be a polynomial taken from $\mathcal{P}_d$ uniformly at random. Such a polynomial can be generated by selecting a coefficient $a_{d_1d_2}\in \F_q$ independently at random for each pair of non-negative integers $(d_1,d_2)$ with $d_1+d_2\leq d$ and writing 
	$$g(X_1,X_2):=\sum a_{d_1d_2} X_1^{d_1}X_2^{d_2}.$$
We will make repeated use of the following property of such random polynomials. Because we will need it below, we state the result for the more general case of $t$-variate random polynomials.

\begin{lemma}[\cite{BuCo}, Lemma 2.3]\label{lem:23}
Suppose that $q>\binom{m}{2}$ and $d\geq m-1$. Then, if $g$ is a random polynomial from $\mathcal{P}_d$ and $x_1,\dots,x_m$ are $m$ distinct points in $\F_q^t$,
$$\Prob[g(x_i=0) \text{ for all } \, i=1,\dots,m]=1/q^m.
$$	
\end{lemma}

Define a $q$-colouring of $G=K_{[n]}$ by 
assigning each edge $ij$ with $i<j$ the colour $g(i,j)$. 
We claim that with positive probability this colouring has the following properties:
\begin{itemize}
	\item[(Q1)] Every (not necessarily proper) colouring of $H$ occurs fewer than $k=k(H)$ times.
	\item[(Q2)] The colouring is $2d$-bounded.
\end{itemize}	
Given a colouring satisfying these conditions, we can apply Proposition~\ref{prop:b-bounded} to obtain a proper colouring with at most $(2d+1)q=O(n)$ colours and no $k$-repeats of $H$.

Suppose now that $H$ has vertices $1, \dots, v$. We consider all copies of $H$ in $G$ of fixed \emph{order and colour type}. That is, given an ordering $(\tau_1, \dots, \tau_v)$ of $1, \dots, v$ and a sequence of $e$ colours $(b_{ij}:ij\in E(H))$, we consider only those copies of $H$ in $G$ where the images $y_1,\dots, y_{v}$ of $1, \dots, v$ in $[n]$ satisfy $y_{\tau_1} < \dots < y_{\tau_v}$ and the edge between $y_i$ and $y_j$ is coloured with colour $b_{ij}$ for all $ij \in E(H)$. 

Let $W$ be the variety defined by the system of equations
\begin{equation}\label{eq:variety}
g(x_{i,j,1},x_{i,j,2})=b_{ij} \mbox{ for } ij\in E(H)
\end{equation}
with $v$ variables $x_1,\dots,x_v$, where each $(x_{i,j,1},x_{i,j,2})$ equals $(x_{i},x_{j})$ if $\tau_i<\tau_j$ and $(x_{j},x_{i})$ otherwise. Let $D$ be the variety defined by the equation 
$$\prod_{1\leq i\neq j \leq v}(x_i-x_j)=0.
$$
Note that the number of copies of $H$ of given type is at most $S=|W(\F_q)\setminus D(\F_q)|$, i.e., the random variable counting the number of `non-degenerate' solutions to the system of equations~\eqref{eq:variety}.

By Lemma~\ref{lem:23}, for any $s \leq d + 1$ distinct points $y_1,\dots,y_s$ in $\F_q^2$ and any $b_1,\dots,b_s\in \F_q$, we have
$$\Prob[g(y_i) = b_i \mbox{ for all } i = 1,\dots,s] = 1/q^s.$$
Now consider $S^t$, observing that it counts ordered collections of $t$ (potentially overlapping or identical) copies of $H$. Note that the graph $H_t$ spanned by any collection of $t$ copies of $H$ will satisfy $e(H_t)\geq |H_t|$, since each connected component of $H_t$ contains a cycle.
Therefore, for $d$ sufficiently large in terms of $v$ and $t$, 
$$\E[S^{t}]\leq \sum_{s=v}^{vt}\frac{s^{vt} q^s}{q^{s}}< (vt)^{2vt},$$
where the factor $s^{vt}$ is an upper bound for the number of ordered ways of placing $t$ ordered copies of $H$ on a fixed set of $s$ vertices.

Since $W$ and $D$ are defined over $\mathbb{F}_q$ and their complexities are bounded by a function of $d$ and $v$, Lemma~\ref{lem:27} implies  that there exists a constant $K=K(d, v)$ such that the random variable $S=|W(\F_q)\setminus D(\F_q)|$ satisfies either $S < K$ or $S > q/2$. Thus, by Markov's inequality,
$$\Prob[S \geq K]=\Prob[S>q/2]=\Prob[S^t>(q/2)^t]\leq \frac{\E[S^t]}{(q/2)^t}< (vt)^{2vt}(q/2)^{-t}.
$$  
This gives an upper bound on the probability of having at least $K$ distinct copies of $H$ for any given order and colour type. Since there are at most $v!$ different order types and at most $q^e=q^v$ choices for the sequence of colours, the union bound implies that, for $t$ sufficiently large in terms of $v$, there are with high probability fewer than $K$ distinct copies of $H$ for each order and colour type. Therefore, writing $k = v! K$, we see that with high probability the colouring has fewer than $k$ copies of $H$ for each colour type and so it satisfies (Q1) with probability $1-o(1)$.

To see that it also satisfies (Q2), we have to show that with high probability, for each fixed $i$ and $a$, the equations $g(x,i)=a$ and $g(i,y)=a$ have at most $d$ solutions, i.e., that each of $g(x,i)$ and $g(i,y)$ has a non-constant coefficient not equal to zero. 

Consider $g(x,i)$ and note that, when viewing $g$ as a polynomial in $\F_q[X_2][X_1]$, the coefficient of $X_1^p$ is a random univariate polynomial $h_p(X_2)$ of degree $d-p$ with the coefficient for each different $p$ being independent. Hence, the probability that $h_p(i)$ is $0$ for all $p \geq 1$ is $q^{-d}$. Since there are at most $q^2$ choices for $i$ and $a$, the union bound implies that with high probability $g(x,i)=a$ has at most $d$ solutions for all $i$ and $a$. Together with the analogous result for $g(i, y) = a$, this establishes (Q2) with probability $1-o(1)$. By the union bound, with positive probability the colouring satisfies both properties (Q1) and (Q2), as claimed. 

Suppose now that $H$ is a tree with $m\geq 1$ edges. Let $q$ be a prime with $n^{1/m} \leq q=(1+o(1))n^{1/m}$. Let $d$ and $t$ again be positive integers, with $d$ chosen to be sufficiently large in terms of $t$ and $t$ sufficiently large in terms of $m$, while $\mathcal{P}_d$ is now the set of all $2m$-variable polynomials over $\F_q$ of degree at most $d$. Let $g_1,\dots, g_{m+1}$ be $m+1$ polynomials taken from $\mathcal{P}_d$ independently and uniformly at random.

Fix an ordering $\prec$ on $\F_q^m$ and associate $V(K_n)$  with a subset of $\F_q^m$. We define a $q^{m+1}$-colouring on $\binom{\F_q^m}{2}$ by assigning each edge $ij$ with $i\prec j$ the colour $(g_1(i,j),\dots,g_{m+1}(i,j))$, where we interpret $(i,j)$ as an element of $\F_q^m\times \F_q^m \cong \F_q^{2m}$.  As in the cycle case, we claim that with positive probability the following statements hold (in which case we are again done by Proposition~\ref{prop:b-bounded}):
\begin{itemize}
	\item[(Q1)] Every (not necessarily proper) colouring of $H$ occurs fewer than $k=k(H)$ times.
	\item[(Q$2'$)] The colouring is $k'$-bounded for some $k'=k'(H)$.
\end{itemize}	

Define order and colour types as before, but with respect to the ordering $\prec$. For a fixed order type $(\tau_1,\dots,\tau_{m+1})$ and colour type $(b_{ij} : ij\in E(H))$, let $W$ be the variety defined by the system of equations 
\begin{equation*} 
	g_\ell(x_{i,j,1},x_{i,j,2})=b_{ij}^{(\ell)} \mbox{ for } ij\in E(H) \mbox{ and } \ell\in[m+1]
\end{equation*}
with $m+1$ variables $x_1,\dots,x_{m+1} \in \F_q^m$, where each $(x_{i,j,1},x_{i,j,2})$ equals $(x_{i},x_{j})$ if $\tau_i<\tau_j$ and $(x_{j},x_{i})$ otherwise and $b_{ij}=(b_{ij}^{(1)}, \dots , b_{ij}^{(m+1)})$. For each $\{i,j\}\in \binom{m+1}{2}$, let $D_{ij}$ be the variety defined by the system of equations 
$$x_i^{(\ell)} = x_j^{(\ell)} \mbox{ for } \ell\in[m],
$$
where $x_i^{(\ell)}$ and $x_j^{(\ell)}$ are the $\ell$-th coordinates of $x_i$ and $x_j\in \F_q^m$, respectively, and let 
$$D=\bigcup_{1\leq i\neq j\leq m + 1}D_{ij}.
$$ 
We again have that the number of copies of $H$ of given type is bounded above by $S=|W(\F_q)\setminus D(\F_q)|$.

By Lemma~\ref{lem:23} and the independence of the $g_\ell$, for any $s \leq d+1$ distinct points $y_1,\dots,y_s$ in $\F_q^{2m}$ and any choice of $b_i^{(\ell)} \in \F_q$ for each $i\in [s]$ and $\ell\in [m+1]$, we have
$$\Prob[g_\ell(y_i) = b^{(\ell)}_i \mbox{ for all } i\in [s] \mbox{ and } \ell\in [m+1]] = 1/q^{s(m+1)}.$$ 
Now consider $S^t$, observing that it counts ordered collections of $t$ (potentially overlapping or identical) copies of $H$. Note that the graph $H_t$ spanned by any collection of $t$ copies of $H$ will satisfy $e(H_t)\geq \frac{m}{m+1}|H_t|$, since each connected component of $H_t$ is either a tree with at least $m$ edges or contains a cycle. Therefore, for $d$ sufficiently large in terms of $m$ and $t$, 
$$\E[S^{t}]\leq \sum_{s=m+1}^{(m+1)t}\frac{s^{(m+1)t}q^{m s}}{q^{\frac{m}{m+1}s(m+1)}}< (2mt)^{4mt}.$$
Since $W$ and $D$ are defined over $\mathbb{F}_q$ and their complexities are bounded by a function of $d$ and $m$, Lemma~\ref{lem:27} implies  that there exists a constant $K=K(d, m)$ such that the random variable $S=|W(\F_q)\setminus D(\F_q)|$ satisfies either $S < K$ or $S > q/2$. Thus, by Markov's inequality,
$$\Prob[S \geq K]=\Prob[S>q/2]=\Prob[S^t>(q/2)^t]\leq \frac{\E[S^t]}{(q/2)^t} < (2mt)^{4mt} (q/2)^{-t}.
$$  
Hence, as in the case of cycles, writing $k = (m+1)! K$, we obtain that with high probability the colouring has fewer than $k$ copies of $H$ for each colour type and so it satisfies (Q1) with probability $1-o(1)$.

To see that it also satisfies (Q$2'$), we have to show that with high probability, for each fixed $i \in \F_q^m$ and $a=(a_\ell)_{\ell\in[m+1]}\in \F_q^{m+1}$, the systems of equations 
$$g_\ell(x,i)=a_\ell \mbox{ for } \ell\in[m+1]$$ 
and  
$$g_\ell(i,y)=a_\ell \mbox{ for } \ell\in[m+1]$$ 
have at most $k'/2$ solutions for some $k'=k'(d)$. For this, we again resort to the random algebraic method.

Fix $i \in \F_q^m$. For each $\ell\in [m+1]$, define $g'_\ell(x):=g_\ell(x,i)$. It is not hard to see that $g'_\ell(x)$ is a uniformly random $m$-variable polynomial of degree $d$. Note also that the set 
\begin{equation}\label{eq:sprime}
\{x\in \F_q^m: g_\ell'(x)=a_\ell \text{ for all }\ell\in[m+1]\}	
\end{equation}
is of the form $W'(\F_q)$ for a variety $W'$ of complexity bounded by a function of $d$ and $m$. Moreover, by Lemma~\ref{lem:23} and the independence of the $g'_\ell$, for any $s \leq d+1$ distinct points $x_1,\dots,x_s\in \F_q^m$ and any choice of $a_j^{(\ell)}\in \F_q$ for each $j\in[s]$ and $\ell\in[m+1]$, we have
$$\Prob[g'_\ell(x_j)=a_j^{(\ell)} \mbox{ for all } j\in[s] \mbox{ and }\ell\in[m+1]]=1/q^{s(m+1)}.
$$ 
Consider the random variable $S'=|W'(\F_q)|$ and observe that $S'^t$ counts ordered collections of $t$ (not necessarily distinct) solutions to~\eqref{eq:sprime}. Therefore, for $d$ sufficiently large in terms of $t$,
$$\E[S'^t]\leq\sum_{s=1}^{t}\frac{t^sq^{sm}}{q^{s(m+1)}}=\sum_{s=1}^{t}\left(\frac{t}{q}\right)^s<1. 
$$

By Lemma~\ref{lem:27}, there exists a constant $K'=K'(d, m)$ such that $S'$ satisfies either $S' < K'$ or $S'>q/2$. Thus, by Markov's inequality, provided $t$ is sufficiently large in terms of $m$,
$$\Prob[S' \geq K'] = \Prob[S' > q/2]=\Prob[S'^t>(q/2)^t]\leq \frac{\E[S'^t]}{(q/2)^t} < \frac{2^t}{q^{t}}<\frac{1}{q^{3m+1}}.
$$

In other words, the system of equations
$g_\ell(x,i)=a_\ell$ with $\ell\in[m+1]$ has with high probability at most $K'$ solutions. Together with the analogous result for the system of equations $g_\ell(i,y)=a_\ell$ with $\ell\in[m+1]$, a union bound over all $i\in \F_q^m$ and $a=(a_\ell)_{\ell\in[m+1]}\in \F_q^{m+1}$ establishes (Q$2'$) with high probability for $k'=2K'$.  By another simple application of the union bound, with positive probability the colouring satisfies both properties (Q1) and (Q$2'$), as claimed. 
\end{proof}	

We finish this section by completing the proof of Theorem~\ref{thm:f2}. Indeed, Theorem~\ref{thm:LLL} implies that $f_2(n,C_6)=O(n^{5/3})$, while the next lemma says that $f_2(n,C_6)=\Omega(n^{4/3})$. Together, these results establish Theorem~\ref{thm:f2}(iv) with $H = C_6$.

\begin{theorem}\label{thm:C6}
$f_2(n,C_6)=\Omega(n^{4/3})$.
\end{theorem}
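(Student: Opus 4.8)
The plan is to establish the lower bound $f_2(n,C_6)=\Omega(n^{4/3})$ by a counting argument: assuming a proper colouring of $K_n$ with $C=o(n^{4/3})$ colours, I will show that it must contain two vertex-disjoint colour-isomorphic copies of $C_6$. The starting point is to count \emph{coloured $6$-cycles}, meaning copies of $C_6$ together with the cyclic sequence of six colours on their edges. Since the colouring is proper, I first argue that there are many monochromatic structures to work with at the level of paths: more precisely, I would count pairs of edges of the same colour sharing no vertex, or, better suited to $C_6$, count ``bicoloured paths'' $P_4$ (three edges) with colour pattern $\alpha\beta\alpha$ or $\alpha\beta\gamma$ and show a lower bound on their number. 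The cleanest route is to directly lower bound the number $N$ of copies of $C_6$ in $K_n$ that are \emph{rainbow-free in a controlled way}, but since every copy of $C_6$ in $K_n$ contributes, the real content is a convexity/pigeonhole estimate on how these $\Theta(n^6)$ copies of $C_6$ distribute over the possible colour types.

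Here is the key arithmetic I expect to carry out. Fix the colour type of a $6$-cycle to be a cyclic word $w=(c_1,\dots,c_6)$ in the colours. The number of colour types is at most $C^6 = o(n^8)$, which is far too weak against $n^6$ copies of $C_6$, so naive pigeonhole fails. The fix is the standard ``algebraic/incidence'' reduction used for even cycles: a copy of $C_6$ with vertices $v_1v_2v_3v_4v_5v_6$ is determined, up to few choices, by its two ``endpoints'' $v_1,v_4$ together with the colour sequence, because in a proper colouring, once $v_1$ and the colour $c_1$ on $v_1v_2$ are fixed, $v_2$ is forced; then $c_2$ forces $v_3$; then $c_3$ forces $v_4$ — so actually a $C_6$ is determined by $(v_1, c_1,c_2,c_3)$ as a walk, and similarly by $(v_1,c_6,c_5,c_4)$ from the other side. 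The right object is therefore the bipartite ``colour graph'' whose left/right vertices are $V(K_n)$ and where we join $u$ to $v$ by the pair $(\text{colour walk})$: a $C_6$ corresponds to a pair of colour-$3$-walks from $v_1$ to $v_4$ with disjoint interiors using a total of $6$ distinct colours. I would set up a graph $\Gamma$ on $V(K_n)$ whose edges are labelled by colour-$3$-walk-types and count: the number of such walk-labelled edges is $\Theta(n^2)\cdot(\text{typical number of colour-walks from }u)$, and a Kővári–Sós–Turán / convexity argument forces two edges of $\Gamma$ with the \emph{same} pair of endpoints and the same labels whenever $C=o(n^{4/3})$, yielding many $C_6$'s of the same colour type sharing a vertex pair $\{v_1,v_4\}$, and then an averaging step produces two that are vertex-disjoint.

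More concretely, the core inequality is this. For each vertex $u$ and each colour $c$ incident to $u$, properness gives a unique edge $uu'$ of colour $c$; iterating three times defines, for each triple of colours $(c_1,c_2,c_3)$ forming a proper walk out of $u$, a unique vertex $\partial(u,c_1,c_2,c_3)$. The number of pairs $(u, (c_1,c_2,c_3))$ is at least $\sum_u \binom{\deg_{\text{col}}(u)}{\le 3}$-type quantity, which, since every vertex sees at least $\ge \binom{n}{2}/C \cdot (1/n) \gg n^{1/3}$ colours on average when $C=o(n^{4/3})$... actually the cleaner bound: the total number of colour-$3$-walks in $K_n$ is $\Theta(n^4)$ (choose the walk as a path of $3$ edges: $n^4$ choices, each giving a colour triple). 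Mapping each colour-$3$-walk $W$ from $u$ to its image $\partial(W)$ gives a map from $\Theta(n^4)$ walks into pairs (endpoint pair $\{u,\partial W\}$, colour triple), of which there are at most $n^2 \cdot C^3 = o(n^6)$... still too weak. So the truly operative fact must instead bound the number of colour triples actually realised: after fixing $u$, the map $(c_1,c_2,c_3)\mapsto \partial(u,c_1,c_2,c_3)$ is injective into $V(K_n)$ (a walk is reconstructed from $u$ and its endpoint plus the colours only if we also know... ) — hmm. The honest statement, and the one I would use, is: fix $u$; the set $R_u$ of realisable colour triples has size $=$ number of colour-$3$-walks from $u$ $\le \deg^{3}$ but $\ge$ (number of $C_6$'s through $u$)$^{1/2}$-ish; and $|R_u| \le$ (number of colours)${}\cdot{}$(something). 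I expect the main obstacle to be exactly pinning down this counting so the exponent comes out as $4/3$ and not weaker: one needs the bound ``number of colour triples from $u$ is $O(C)$'' or similar, which comes from the fact that each colour triple is witnessed by a colour-$3$-walk and, crucially, a dyadic/convexity argument controlling codegrees. Once the right lemma — there are $\Omega(n^6/C^3)$... no, $\Omega(n^6/C^6)$ would be trivial; the point is $\Omega(n^6/C^3)$ copies of $C_6$ in some single colour type — is in hand, then $n^6/C^3 \gg n^2$ (equivalently $C=o(n^{4/3})$) forces, by pigeonhole on the $\binom{n}{2}$ pairs $\{v_1,v_4\}$, two $C_6$'s of the same colour type on disjoint interior vertex sets, and a final cleanup (discarding the $O(n)$ copies meeting a fixed copy) produces a genuine disjoint repeat. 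The hardest part, as in all even-cycle counting bounds, is the convexity step that upgrades an average-degree statement in the auxiliary colour-walk structure to a statement about a single colour type with enough copies; the properness of the colouring is what makes each step of the walk deterministic and is the essential input that rules out the degenerate behaviour which would otherwise collapse the bound.
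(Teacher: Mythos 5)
You yourself flag that the core counting lemma is ``not in hand,'' and unfortunately it cannot be supplied: the statement you are aiming at is false. In a proper colouring, a copy of $C_6$ with a prescribed cyclic colour sequence $(c_1,\ldots,c_6)$ is completely determined by its first vertex, since $c_1$ forces $v_2$, then $c_2$ forces $v_3$, and so on around the cycle --- a fact you observe yourself. Consequently every colour type is realised at most $O(n)$ times, full stop. The lemma you want, that some type is represented $\Omega(n^6/C^3)$ times (so that this exceeds $n^2$), therefore fails throughout the regime $C=o(n^{5/3})$, and a fortiori for $C=o(n^{4/3})$. This is not a gap in bookkeeping: pigeonholing over full colour types of $C_6$ is structurally incapable of producing the $4/3$ exponent, and your repeated observations that each intermediate count comes out ``too weak'' are symptoms of exactly this.

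The paper's proof proceeds by a different mechanism that never argues about one $C_6$ at a time. Split $V(K_n)$ into halves $X,Y$ and form a bipartite auxiliary graph $F$ on $\binom{X}{2}\cup\binom{Y}{2}$, joining $\{x_1,x_2\}$ to $\{y_1,y_2\}$ whenever one of the two perfect matchings between these four vertices is monochromatic. Convexity of binomial coefficients gives $e(F)=\Omega(n^4/C)$, which dominates any fixed multiple of $|F|^{4/3}$ once $C\leq\gamma n^{4/3}$ with $\gamma$ small. The Faudree--Simonovits theorem $\mathrm{ex}(N,\theta_{3,\ell})=O_\ell(N^{4/3})$ then produces a copy of $\theta_{3,60}$ in $F$. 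Each internally disjoint $u$--$w$ path of length $3$ in that theta graph encodes three monochromatic $2$-matchings in $K_n$; after a pruning step (the hypergraph-matching trick also used for trees, to make the underlying pairs in $X$ and in $Y$ disjoint, followed by pigeonhole over the two possible matching orientations at $w$), two of these paths are aligned, and the two ``layers'' of their matchings assemble into a pair of vertex-disjoint, colour-isomorphic copies of $C_6$. The missing structural idea in your outline is that $F$ implicitly records colour-isomorphic \emph{pairs} of partial configurations, so a genuine Tur\'an-type extremal theorem applied to $F$ --- rather than a colour-type pigeonhole in $K_n$ --- does the decisive work.
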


\begin{proof}
Suppose that $C=\gamma n^{4/3}$, where $\gamma$ is a small constant. Suppose also that $n$ is taken sufficiently large and we have a $C$-colouring of $G=K_n$. For simplicity in our notation, we will also assume that $n$ is even. Take an arbitrary equipartition of $V(G)$ into parts $X$ and $Y$. Let $U:=\binom{X}{2}$, $W:=\binom{Y}{2}$ and let $F$ be the following auxiliary graph, similar to the graph used in the proof of Theorem~\ref{thm:paths}. The vertex set is $V(F):=U\cup W$ and $F=F[U,W]$ is bipartite with $uw\in E(F)$ for $u=\{x_1, x_2\}$, $w=\{y_1, y_2\}$ if and only if at least one of $\{x_1y_1,x_2y_2\}$, $\{x_1y_2,x_2y_1\}$ is a monochromatic matching in $G$. Therefore, 
$$|F|=2\binom{n/2}{2}=(1+o(1))\frac{n^2}{4} \geq \frac{n^2}{5}$$
and $e(F)\geq E_2/2$,
where $E_2$ is the number of monochromatic matchings of size $2$ between $X$ and $Y$, as every such matching gives rise to an edge of $F$ and each edge is counted at most twice. By the convexity of binomial coefficients, we obtain 
\begin{equation}\label{eq:eF}
e(F)\geq\frac{E_2}{2}\geq \frac{C}{2}\binom{\frac{n^2}{5C}}{2}>\frac{n^4}{200C}=\frac{n^{8/3}}{200\gamma}>\frac{|F|^{4/3}}{200\gamma}.
\end{equation}

The \emph{theta graph} $\theta_{3,\ell}$ is the bipartite graph composed of $\ell$ internally-disjoint paths of length $3$ sharing the same pair of endpoints. For example, $\theta_{3,2}=C_6$. A theorem of Faudree and Simonovits~\cite{FS} (see also the recent paper~\cite{BuTa}) states that, for every $\ell \geq 2$, $\textrm{ex}(n,\theta_{3,\ell})=O_\ell(n^{4/3})$. Hence, by this and~\eqref{eq:eF}, putting $\ell=60$, there exists  $\gamma_0$ such that if $\gamma<\gamma_0$, then $F$ contains a copy of $\theta_{3,60}$, noting that, since $F$ is bipartite, its endpoints cannot be in the same part.

Suppose now that $u\in U$ and $w\in W$ are the endpoints of our copy of $\theta_{3,60}$, the neighbours of $u$ and $w$ are $w_1\dots,w_\ell$ and $u_1,\dots,u_\ell$, respectively, and $u w_i u_i w$ is a path for each $i = 1, 2, \dots, \ell$. We claim that, after relabelling, there are three indices $1, 2, 3$ such that the eight vertex pairs in $G$ encoded by the vertices $u,w,u_1,u_2,u_3,w_1,w_2,w_3$ are disjoint. Since $u,u_1,u_2,u_3\in U$ and $w,w_1,w_2,w_3\in W$, we only need to make sure that the vertex pairs are disjoint in each part. 

To this end, note, by the same hypergraph colouring argument used in the proof of Theorem~\ref{thm:paths}, that any set of $t$ vertices $w_{i_1},\dots, w_{i_t} \in N_F(u)$ contains a subset of size at least $t/3$ in which all vertices correspond to disjoint pairs in $G$. The same holds for any set of $t$ vertices in  $N_F(w)$. 

Thus, at least $15$ of the vertices $w_1,\dots,w_\ell$ correspond to pairs in $Y$ that are disjoint from each other and from $w$ (since there are at least $20$ of the $w_i$ whose corresponding pairs are disjoint from each other and $w$ overlaps with at most two of these). Relabelling, we can assume that these vertices are $w_1,\dots,w_{15}$. Applying the same argument to $u_1,\dots, u_{15}$ and relabelling if necessary, we obtain three vertices $u_1,u_2,u_3$  corresponding to pairs in $X$ which are disjoint from each other and from $u$.

Let $u:=\{x^1,x^2\}$, $w:=\{y^1,y^2\}$, $u_i:=\{x_i^1,x_i^2\}$ and $w_i:=\{y_i^1,y_i^2\}$ for $i=1,2,3$. By the definition of $F$, without loss of generality we may assume that we have monochromatic pairs $\{x^1y_i^1,x^2y_i^2\}$ and $\{y_i^1x_i^1,y_i^2x_i^2\}$ for $i=1,2,3$. For each of the three edges $u_iw\in E(F)$, we have two options: either the matching $\{x_i^1y^1,x_i^2y^2\}$ is monochromatic or the matching $\{x_i^1y^2,x_i^2y^1\}$ is. By the pigeonhole principle, there will be two indices, say $i=1,2$, where the same situation occurs. In the case where $\{x_i^1y^1,x_i^2y^2\}$ is monochromatic for $i=1,2$, we have a $2$-repeat of $C_6$ with copies $x^1y_1^1x_1^1y^1x_2^1y_2^1x^1$ and $x^2y_1^2x_1^2y^2x_2^2y_2^2x^2$. If instead, $\{x_i^1y^2,x_i^2y^1\}$ is monochromatic for $i=1,2$, our $2$-repeat of $C_6$ consists of the copies $x^1y_1^1x_1^1y^2x_2^1y_2^1x^1$ and $x^2y_1^2x_1^2y^1x_2^2y_2^2x^2$.
\end{proof}	

\section{Open Problems}\label{sec:outlook}

\subsection*{Trees}
Proposition~\ref{prop:infinite} and Theorem~\ref{thm:evencycle} together imply that for each $m$-edge tree $T$, there exists a constant $k_0 = k_0(T)$ such that, for all $k\geq k_0$, 
$$f_k(n,T)=\Theta(n^{\frac{m+1}{m}}).$$
It would be interesting to prove a tight bound for $k_0$ -- we suspect that the answer might be $m+1$ for any $m$-edge tree. When $T$ is the star with $2$ edges, our Theorem~\ref{thm:3rep} verifies this suspicion. When $T$ is the star with $m$ edges, this problem seems closely related to the Zarankiewicz problem for $K_{m, m+1}$, suggesting that it may already be very difficult for $m \geq 3$. An easier problem might be to give an upper bound for $f_3(n, T)$ when $T$ is an $m$-edge tree which matches the lower bound $\Omega(n^{3/2}/\sqrt{m})$ of Theorem~\ref{thm:paths} up to an absolute constant factor.

\subsection*{Bipartite graphs containing a cycle}
Corollary~\ref{cor:LLLbipartite} tells us that if $e(H)\geq 2|H|-2$, then $f_2(n, H) = O(n)$. How sharp is this bound? Recalling that $\theta_{3,\ell}$ is the bipartite graph consisting of $\ell$ internally-disjoint paths of length $3$ sharing the same pair of endpoints, an extension of Theorem~\ref{thm:C6} implies that, for any $\ell \geq 2$,
\[f_2(n, \theta_{3,\ell}) = \Omega(n^{4/3}).\]
Since $e(\theta_{3, \ell}) = \frac{3}{2} |\theta_{3,\ell}| - 3$, we see that Corollary~\ref{cor:LLLbipartite} cannot be improved to say that $e(H)\geq \frac{3}{2}|H|-3$ implies that $f_2(n, H) = O(n)$. An interesting test case for deciding whether the latter bound can be pushed closer to $2 |H|$ might be to study $f_2(n, K'_t)$, where $K'_t$ is the 1-subdivision of the complete graph $K_t$. The extremal properties of these graphs have received close attention in the recent literature~\cite{CoJaLe, CoLe, Ja} and some of the ideas developed in these papers might also prove useful in our context.

\subsection*{Even cycles}	
There are an abundance of open problems regarding even cycles, with the simplest being whether $f_2(n,C_4)=\Theta(n)$. We have only shown that $f_2(n, C_4) = O(n^{3/2})$, as a particular instance of Theorem~\ref{thm:LLL}. It would also be interesting to determine the correct exponent for $C_6$. That is, determine the exponent $\alpha$, if it exists, such that $f_2(n, C_6) = \Theta(n^\alpha)$. Our results only show that $4/3 \leq \alpha \leq 5/3$. Finally, for longer cycles, we suspect that the exponent approaches $2$. That is, is it true that for every $\varepsilon>0$, there exists $m_0=m_0(\varepsilon)$ such that, for all $m\geq m_0$, $f_2(n,C_{2m})=\Omega(n^{2-\varepsilon})$? This problem seems to bear some relation to the even cycle case of Conjecture~3.1 from~\cite{GrJaNa}. 

\subsection*{Non-bipartite graphs} 
When $H$ is non-bipartite, we know, by the remark after Theorem~\ref{thm:oddcycle}, that $f_2(n, H) = n$ for $n$ odd. On the other hand, when $n$ is even, we have only shown that $f_2(n, H) \leq n + 1$. However, in many cases, this bound can be improved to $f_2(n, H) = n - 1$. To see this, we start with the colouring of $K_{n-1}$ with $n-1$ colours used in the proof of Theorem~\ref{thm:oddcycle} and then consider the unique extension of this colouring to $K_n$. The resulting colouring does not contain repeats of `most' non-bipartite $H$, such as graphs containing two disjoint odd cycles, so that $f_2(n, H) = n - 1$ for these graphs. Similarly, this colouring allows us to conclude that $f_3(n, H) = n -1$ for $n$ even and any non-bipartite $H$. However, it remains to determine $f_2(n, H)$ exactly in the natural case where $n$ is even and $H$ is an odd cycle. Curiously, when $H$ is a triangle, each $1$-factorization of $K_n$ has $\binom{n-1}{3}$ triples of possible colours, which is less than $\binom{n}{3}$, the total number of triangles. Hence, by the pigeonhole principle, there will be two colour-isomorphic triangles, but they may not be vertex disjoint.

\subsection*{Repeats of other patterns}
Our results all yield repeated rainbow copies, where the edges in each copy all receive a different colour, but one could also ask for repeats of other patterns. For instance, what is the smallest number of colours in a proper edge-colouring which does not contain a $2$-repeat of the properly $2$-coloured path of length $3$? From the construction in Theorem~\ref{thm:3rep}, it is not hard to deduce an upper bound of $O(n^{3/2})$, whereas a simple counting argument yields a lower bound of $\Omega(n^{4/3})$. We believe the latter to be tight.  
 
\subsection*{A problem in Ramsey theory} 
Our original motivation for studying repeats came from a problem in generalised Ramsey theory raised by Krueger~\cite[Problem 1.2]{Kr}. His question asks for the minimum number of colours in an edge-colouring of $K_n$ such that every copy of $P_t$, the path with $t$ vertices, where $t$ is odd, contains at least $\frac{t+1}{2}$ colours. In our context, the most closely related problem is to study lower bounds for the appearance, in proper edge-colourings, of the pattern consisting of two repeats of the path $P_{(t+1)/2}$ sharing an endpoint. At present, the best lower bound for Krueger's problem is $\widetilde{\Omega}(n^{3/2})$ for all $t \geq 9$. It would be interesting to determine if the exponent tends to $2$ as $t$ tends to infinity both for this problem and ours.

\subsection*{Hypergraphs}
Finally, we note that there are several ways to generalise our problem to the setting of $r$-uniform hypergraphs, depending on how we define a proper colouring. Indeed, for any $1\leq t<r$, one could ask that any two hyperedges sharing at least $t$ vertices receive distinct colours, resulting in a family of problems. It may also be interesting to study proper colourings of Steiner Triple Systems, looking for repeats of fixed linear $3$-graphs.

\vspace{4mm}

\section*{Acknowledgements}

We are extremely grateful to Sean English and Bob Krueger for spotting an error in an earlier version of this paper and suggesting a fix.

\end{document}